\documentclass[journal]{IEEEtran}
%
% If IEEEtran.cls has not been installed into the LaTeX system files,
% manually specify the path to it like:
% \documentclass[journal]{../sty/IEEEtran}

\usepackage{tikz}
\usetikzlibrary{shapes, shapes.geometric, shapes.symbols, shapes.arrows, shapes.multipart, shapes.callouts, shapes.misc}
\usepackage[activate={true,nocompatibility},final,tracking=true,kerning=true,spacing=true,factor=1100,stretch=10,shrink=10]{microtype}

\usepackage{etex}
\usepackage{graphicx}
\usepackage{amsmath}
\usepackage{amssymb}
\usepackage{mathrsfs}
\usepackage[vlined,ruled]{algorithm2e}
\usepackage{subfigure}
\usepackage{cite}
\usepackage{epstopdf}
\usepackage{booktabs}

\newtheorem{theorem}{Theorem}[section]
\newtheorem{lemma}[theorem]{Lemma}
\newtheorem{definition}{Definition}

\newtheorem{proposition}[theorem]{Proposition}

\newtheorem{remark}{Remark}
%\newtheorem{assumption}[theorem]{Assumption}

%"Proof of" environment
\newenvironment{pfof}[1]{\vspace{1ex}\noindent{\itshape Proof of
    #1:}\hspace{0.5em}} {\hfill\oprocend\vspace{1ex}}
\newenvironment{proof}[1]{\vspace{1ex}\noindent{\itshape Proof:}\hspace{0.5em}} {\hfill\oprocend\vspace{1ex}}

%%%%%%%%%%%%%%%%%%%%%%%%%%%%%%%%%%%%%%%%%%%%%%%%%%%%%%%%%%%%%%%%%%%%%%%%

% Custom made commands and environments
                                	% creates a bold symbol
					                      	% time derivative d/dt
                     			% \abs{x} -->  |x| with proper spacing
                     		% \norm{x} -->  ||x|| with proper spacing
\newcommand*\mc[0]{\mathcal}                                                  		% abbreviation for \mathcal
                                                  		% abbreviation for \mathbb
                                        		% define the argmin operator, admits \argmin \limits_{...} operation
                                        		% define the argmin operator, admits \argmin \limits_{...} operation
 	                                       		% define the sinc operator
 %or \top or \intercal

\DeclareMathOperator*{\minimize}{minimize} 	
 	                              % define the multivariable sin operator
 	                              % define the multivariable cos operator
 	                              % define the multivariable sinc operator
 	                              % define the multivariable arcsin operator

\newcommand{\until}[1]{\{1,\dots, #1\}}

\newcommand{\map}[3]{#1: #2 \rightarrow #3}

\newcommand{\real}{\mathbb{R}}
\newcommand{\complex}{\mathbb{C}}

% Procend
\newcommand\oprocendsymbol{\hbox{$\square$}}
\newcommand\oprocend{\relax\ifmmode\else\unskip\hfill\fi\oprocendsymbol}

\newcommand{\vect}[1]{\mathbbold{#1}}
\newcommand{\vones}[1][]{\vect{1}_{#1}}
\newcommand{\vzeros}[1][]{\vect{0}_{#1}}
\DeclareSymbolFont{bbold}{U}{bbold}{m}{n}
\DeclareSymbolFontAlphabet{\mathbbold}{bbold}

\newcommand{\tb}{}

% side margin comments

\graphicspath{{./Figures/}}

\usepackage{enumerate}

\usepackage{xcolor}

\ifCLASSINFOpdf
  % \usepackage[pdftex]{graphicx}
  % declare the path(s) where your graphic files are
  % \graphicspath{{../pdf/}{../jpeg/}}
  % and their extensions so you won't have to specify these with
  % every instance of \includegraphics
  % \DeclareGraphicsExtensions{.pdf,.jpeg,.png}
\else
  % or other class option (dvipsone, dvipdf, if not using dvips). graphicx
  % will default to the driver specified in the system graphics.cfg if no
  % driver is specified.
  % \usepackage[dvips]{graphicx}
  % declare the path(s) where your graphic files are
  % \graphicspath{{../eps/}}
  % and their extensions so you won't have to specify these with
  % every instance of \includegraphics
  % \DeclareGraphicsExtensions{.eps}
\fi

\graphicspath{{./Figures/}}

% correct bad hyphenation here
\hyphenation{op-tical net-works semi-conduc-tor}

\begin{document}
%
% paper title
% can use linebreaks \\ within to get better formatting as desired
% Do not put math or special symbols in the title.
\title{Voltage Stabilization in Microgrids\\ via Quadratic Droop Control}
%
%
% author names and IEEE memberships
% note positions of commas and nonbreaking spaces ( ~ ) LaTeX will not break
% a structure at a ~ so this keeps an author's name from being broken across
% two lines.
% use \thanks{} to gain access to the first footnote area
% a separate \thanks must be used for each paragraph as LaTeX2e's \thanks
% was not built to handle multiple paragraphs
%

\author{John~W.~Simpson-Porco,~\IEEEmembership{Member,~IEEE,}
        Florian~D\"{o}rfler,~\IEEEmembership{Member,~IEEE,}
        and~Francesco~Bullo,~\IEEEmembership{Fellow,~IEEE}% <-this % stops a space
\thanks{J.~W.~Simpson-Porco is with the Department of Electrical and Computer Engineering, University of Waterloo. Email: {\footnotesize jwsimpson@uwaterloo.ca}. F.~Bullo is with the Department of Mechanical Engineering and the Center for Control, Dynamical System and Computation, University of California at Santa Barbara. Email: 
   {\footnotesize bullo@engineering.ucsb.edu}. F. D\"orfler is with the Automatic Control Laboratory, ETH Z\"{u}rich. Email: 
{dorfler@ethz.ch}.}
\thanks{This work was supported by NSF grants CNS-1135819 and by the National Science and Engineering Research Council of Canada.}
% <-this % stops a space
}

% note the % following the last \IEEEmembership and also \thanks - 
% these prevent an unwanted space from occurring between the last author name
% and the end of the author line. i.e., if you had this:
% 
% \author{....lastname \thanks{...} \thanks{...} }
%                     ^------------^------------^----Do not want these spaces!
%
% a space would be appended to the last name and could cause every name on that
% line to be shifted left slightly. This is one of those "LaTeX things". For
% instance, "\textbf{A} \textbf{B}" will typeset as "A B" not "AB". To get
% "AB" then you have to do: "\textbf{A}\textbf{B}"
% \thanks is no different in this regard, so shield the last } of each \thanks
% that ends a line with a % and do not let a space in before the next \thanks.
% Spaces after \IEEEmembership other than the last one are OK (and needed) as
% you are supposed to have spaces between the names. For what it is worth,
% this is a minor point as most people would not even notice if the said evil
% space somehow managed to creep in.

% The paper headers
\markboth{}%
{Shell \MakeLowercase{\textit{et al.}}: Bare Demo of IEEEtran.cls for Journals}
% The only time the second header will appear is for the odd numbered pages
% after the title page when using the twoside option.
% 
% *** Note that you probably will NOT want to include the author's ***
% *** name in the headers of peer review papers.                   ***
% You can use \ifCLASSOPTIONpeerreview for conditional compilation here if
% you desire.

% If you want to put a publisher's ID mark on the page you can do it like
% this:
%\IEEEpubid{0000--0000/00\$00.00~\copyright~2012 IEEE}
% Remember, if you use this you must call \IEEEpubidadjcol in the second
% column for its text to clear the IEEEpubid mark.

% use for special paper notices
%\IEEEspecialpapernotice{(Invited Paper)}

% make the title area
\maketitle

% As a general rule, do not put math, special symbols or citations
% in the abstract or keywords.

%\begin{enumerate}
%\item ``pi/2'' condition for RPFE. Look at linearization papers, Wu, Kaye, Grijalva
%\end{enumerate}

\begin{abstract}
We consider the problem of voltage stability and reactive power balancing in {\tb islanded} small-scale electrical networks outfitted with DC/AC inverters (``microgrids'').
A droop-like voltage feedback controller is proposed which is quadratic in the local voltage magnitude, allowing for the application of circuit-theoretic analysis techniques to the closed-loop system. 
{\tb The operating points of the closed-loop microgrid are in exact correspondence with the solutions of a reduced power flow equation, and we provide explicit solutions and small-signal stability analyses under several static and dynamic load models.}
Controller optimality is characterized as follows: we show a one-to-one correspondence between the high-voltage equilibrium of the microgrid under quadratic droop control, and the solution of an optimization problem which minimizes {\tb a trade-off between} reactive power dissipation and voltage deviations. 
Power sharing performance of the controller is characterized as a function of the controller gains, network topology, and parameters. Perhaps surprisingly, proportional sharing of the total load between inverters is achieved in the low-gain limit, independent of the circuit topology or reactances.
All results hold for arbitrary grid topologies, with arbitrary numbers of inverters and loads. {\tb Numerical results confirm the robustness of the controller to unmodeled dynamics.}
\end{abstract}

%We consider the problem of voltage stability and reactive power balancing in islanded small-scale electrical networks outfitted with DC/AC inverters (``microgrids''). A droop-like voltage feedback controller is proposed which is quadratic in the local voltage magnitude, allowing for the application of circuit-theoretic analysis techniques to the closed-loop system. The operating points of the closed-loop microgrid are in exact correspondence with the solutions of a reduced power flow equation, and we provide explicit solutions and small-signal stability analyses under several static and dynamic load models. Controller optimality is characterized as follows: we show a one-to-one correspondence between the high-voltage equilibrium of the microgrid under quadratic droop control, and the solution of an optimization problem which minimizes a trade-off between reactive power dissipation and voltage deviations. Power sharing performance of the controller is characterized as a function of the controller gains, network topology, and parameters. Perhaps surprisingly, proportional sharing of the total load between inverters is achieved in the low-gain limit, independent of the circuit topology or reactances. All results hold for arbitrary grid topologies, with arbitrary numbers of inverters and loads. Numerical results confirm the robustness of the controller to unmodeled dynamics.

\begin{IEEEkeywords}
Inverter control, microgrids, voltage control, Kron reduction, nonlinear circuits
\end{IEEEkeywords}

%%%%%%%%%%%%%%%%%%%%%%%%%%%%%%%%%%%%%%%%%%%%%%%%%%%%%%%%%%%%%%%%%%%%%%%%%%%%%%%%
% -------------------------------------------------------------------------------------- %
\section{Introduction}
\label{Section: Introduction}
% -------------------------------------------------------------------------------------- 

%\textcolor{red}{
%\textbf{Potential Items to add:
%\begin{enumerate}
%\item Secondary control: Linearized stability of projective secondary control: QDAPI Problems.pdf. Cool secondary control ideas in QVSecondary pdf. Maybe just show stability of a basic secondary control?
%\item Hard bounds using barrier functions?
%\item $\mathcal{H}_2$ gain from ??? to ???
%\item Formulate optimization as ``desirable engineering cost'' plus circuit laws as constraints on optimization??
%\item Power sharing results extend to original droop too!
%\item actuation constraints?
%\item Fixed point results
%\item fixed point vs. equilibrium vs. operating point vs. steady state vs. steady-state operating point
%\item decentralized controllers will always preserve monotonicity of power systems.
%\item secondary control -- raise average voltage level $E_{\mathrm{avg}}^*$? Or maybe $E_{\rm avg} = A_{\rm avg}E_I$? Or maybe just the unweighted average?
%\end{enumerate}
%}
%}

The wide-spread integration of low-voltage small-scale renewable energy sources requires that the present centralized electric power transmission paradigm to evolve towards a more distributed future. 
As a flexible bridge between {\tb distributed generators} and larger distribution grids, \emph{microgrids} continue to attract attention \cite{JAPL-CLM-AGM:06,JMG-JCV-JM-LGDV-MC:11,DEO-AMS-AHE-CAC-RI-MK-AHH-OGM-MS-RPB-GAJE-NDH:14}.
Microgrids are low-voltage electrical distribution networks, heterogeneously composed of distributed generation, storage, load, and managed autonomously from the larger primary grid. 
While often connected to the larger grid through a ``point of common coupling'', microgrids are also able to ``island'' themselves and operate independently \cite{JMG-JCV-JM-LGDV-MC:11,QCZ-TH:13}. This independent self-sufficiency is crucial for reliable power delivery in remote communities, in military outposts, in developing nations lacking large-scale infrastructure, and in backup systems for critical loads such as hospitals and campuses.
Energy generation within a microgrid can be quite heterogeneous, including photovoltaic, wind, micro-turbines, etc. Many of these sources generate either variable frequency AC power or DC power, and are interfaced with a synchronous AC grid via power electronic DC/AC \emph{inverters}.
In islanded operation, {\tb at least some} of these inverters must operate as \emph{grid-forming} devices. That is, control actions must be taken through them to ensure synchronization, voltage stability, and load sharing in the network \cite{JAPL-CLM-AGM:06,FK-RI-NH-AD:08,JMG-JCV-JM-LGDV-MC:11,QCZ-TH:13}, and to establish higher-level objectives such as frequency regulation and economic dispatch \cite{jwsp-fd-fb:12u,jwsp-qs-fd-jmv-jmg-fb:13s,FD-JWSP-FB:13y}.

\begin{figure}[t]
\begin{center}
\subfigure[]{\includegraphics[height=0.41\columnwidth]{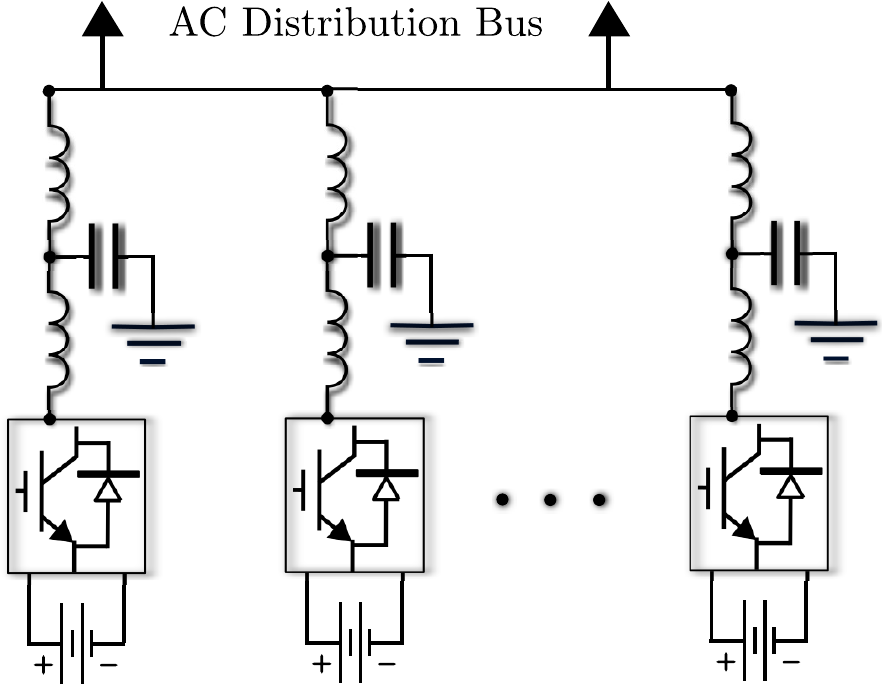}\label{Fig:InvNet}}\,\,\,\,\,
\subfigure[]{\includegraphics[width=0.4\columnwidth]{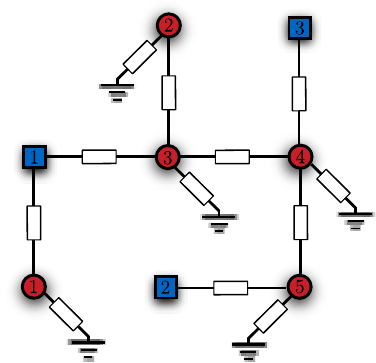}\label{Fig:Circuit1}}
\caption[(a) Schematic of a ``parallel'' microgrid, in which several inverters supply power to a distribution bus (effectively a single load) (b) A simple non-parallel microgrid consisting of five loads and three inverters.]{(a) Schematic of a ``parallel'' microgrid, in which several inverters supply power to a distribution bus (effectively a single load) (b) A simple non-parallel microgrid consisting of five loads {\tikz\draw[black,semithick,fill=red] (0,0) circle (.5ex);} and three inverters {\tikz{\path[draw=black,semithick,fill={rgb:red,0;green,102;blue,196}] (0,0) rectangle (0.12cm,0.12cm);}}.}
\label{Fig:ParallelandGeneral}
\end{center}
\end{figure}

\subsection{Literature Review}

The so-called \emph{droop controllers} (and their many derivatives) have been used with some success to achieve {\tb primary} control goals such as stability and load sharing, see \cite{AT-HJ-TU-KM:97,JMG-JCV-JM-LGDV-MC:11,SB-MC-PP-DP:02,JAPL-CLM-AGM:06,YWL-CNK:09,JMG-JCV-JM-MC-LGDV:09,QCZ-TH:13}. 
Despite being the foundational technique for networked operation of {\tb islanded microgrids} (Figure \ref{Fig:ParallelandGeneral}), the stability and fundamental limitations of droop-controlled microgrids has only recently begun to be investigated from a rigorous system-theoretic point of view \cite{jwsp-fd-fb:12u,JS-RO-AA-JR-TS:13,ZW-MX-MDL:13,AB-FLL-AD:14}. 
%
%In this paper we focus on droop control for voltage magnitudes, and refer the interested reader to \cite{jwsp-fd-fb:12u,FD-JWSP-FB:13y} and the references therein for results on frequency droop control.

{\tb Our focus here is on voltage control, which we now provide some context for. In high-voltage networks, the grid-side voltages of transformers which interface synchronous generators are regulated to nominal set points via automatic voltage regulator systems. }
However, if this strategy was applied to inverters in low-voltage networks, the small mismatches in voltage set points would combine with the large impedances presented by distribution lines, and result in large circulating reactive currents between inverters. Moreover, due to the close electrical proximity of devices in small microgrids, and the small power capacities (ratings) of the sources, it is desirable that controllers establish set percentages of the total load to be supplied by each inverter (so-called \emph{power sharing}).
These technical obstacles %led to 
motivated
the use of voltage droop control \cite{MCC-DMD-RA:93} as a heuristic proportional controller to establish power sharing between units while maintaining voltages within a reasonable range around their set point values; {\tb a technical review of the voltage droop controller is presented in Section \ref{Subsection: Review of Conventional Droop Controller}.}
%

%\fdmargin{made a few changes and kept the length constant}

The widespread deployment of voltage droop has led to several attempts at stability analysis \cite{ZW-MX-MDL:13,JS-RO-AA-JR-TS:13}. 
Both references begin with all-to-all Kron-reduced network models which do not explicitly contain loads. 
They then assume the existence of a system equilibrium, and derive sufficient stability conditions which depend on the uncharacterized equilibrium.
Results apply only for constant-impedance or constant-current load models, and no characterizations are given of reactive power sharing in steady state. 
Thus, the available literature offers no guidance on the foundational issue of operating point feasibility, i.e., the 
existence and locations of steady states for the microgrid which satisfy operational constraints.
This gap of knowledge means that precise reactive loading limits and security margins are unknown, making system monitoring and non-conservative operation difficult.
{\tb Moreover, stable network operation is often limited by stiff constant power loads, which do not reduce their current consumption when voltages fall.}
%

%Classic intuition developed in power systems is that inductive loading distorts the spatial voltage profile in a network by dragging down the voltage magnitudes at inductively loaded nodes. Moreover, it is generally accepted that high-voltage (low loss) equilibria of such networks are stable, while low-voltage (high loss) equilibria are unstable \cite{JM-JWB-JRB:08}.

Regarding control performance, a well-recognized drawback of voltage droop control is that \textemdash{} while achieving better power sharing performance than mere voltage regulation \textemdash{} the power sharing properties can still be quite poor. This has led some authors to investigate alternative strategies to implement reactive power sharing \cite{YWL-CNK:09,QCZ:13,JS-TS-JR-TS:15,jwsp-qs-fd-jmv-jmg-fb:13s} that are based on directly load measurements or inter-unit communication. Aside from these additional requirements, these approaches do not assist in characterizing the power sharing properties {\tb of the standard, decentralized droop control.}
A general discussion of fundamental limits for voltage control and reactive power sharing can be found in \cite[Section III]{jwsp-qs-fd-jmv-jmg-fb:13s}, and some recent non-droop-based control (but nonetheless related) approaches to reactive power compensation can be found in \cite{KT-PS-SV-MC:11,SB-SZ:12, MF-RN-CC-SL:12, MF-CL-SL:13,DOE-SG:12,NL-GQ-MD:14,AHE-EJD-RI:12,SR-FS-GFT:15,VN-QS-JMG-FLL-AD:16,MT-JWSP-FD-RC-FB:15a}. 

In summary, two fundamental outstanding problems regarding the voltage stability of droop-controlled microgrids are (i) {\tb to establish conditions under which a stable network equilibrium exists for various load models, and (ii) to characterize the resulting power sharing properties in terms of grid topology, branch admittances, loads, and controller gains.}

\subsection{Contributions}

In the preliminary version of this work \cite{jwsp-fd-fb:13h}, we introduced the quadratic droop controller, a slight modification of the conventional linear droop curve to a quadratic one. Quadratic droop leads to the same steady states as conventional droop control (for particular gains) but allows for circuit-theoretic techniques to be leveraged for analysis purposes. In \cite{jwsp-fd-fb:13h} we studied system stability for the case of {\tb an islanded parallel} microgrid, where several inverters feed power to a single common load or a common distribution bus (Remark \ref{Rem:Parallel}). In this paper we %extend and prove a key general result from \cite{jwsp-fd-fb:13h} (Theorem \ref{Thm:ReducedFixedPoint} in both papers), but otherwise 
depart from the case of a parallel microgrid and study system stability for arbitrary interconnections of inverters and loads, {\tb under various load models}. In addition, we investigate the optimality and performance characteristics of the controller. {\tb Linearization} ideas presented in \cite{BG-JWSP-FD-SZ-FB:13zb} are also relevant to the current presentation at a technical level, and will be used in proofs of several results.

%In contrast with the conventional droop controller \textemdash{} the design of which is motivated by the local, linear behavior of power flow near a steady-state operating point \textemdash{} our proposed design is inherently nonlinear, taking inspiration from the asymmetric nature of AC power flow.

There are three main technical contributions in this paper. First, in Section \ref{Section: Quadratic Droop Control for Voltage Stabilization} we present %reintroduce and review
 the quadratic droop controller, highlighting its circuit-theoretic foundations. We state and prove a correspondence between the equilibrium points of the closed-loop system and the solutions of a reduced power flow equation, before proceeding to analyze the stability of the closed-loop system for both static and dynamic load models. In particular, we analyze system stability for the static ZI and ZIP load models, and for a dynamic model describing a variable shunt susceptance.  Our stability analyses in Theorem \ref{Thm:ReducedFixedPoint}, Theorem \ref{Lem:ZILoads}, and Theorem \ref{Thm:ZIP} result in succinct and physically intuitive stability conditions, significantly generalizing the results and intuition developed for parallel microgrids in \cite{jwsp-fd-fb:13h}.

%Confirming classic intuition, our condition establishes the existence of a critical inductive load for the network, which depends only on the network topology, admittances,  inverter set point voltages, and controller gains. The condition succinctly states that the network load must be less inductive than this critical value. 

Second, in Section \ref{Sec:Optimal} we investigate the optimality properties of the quadratic droop controller. We demonstrate that the microgrid dynamics under quadratic droop control can be interpreted as a decentralized primal algorithm for solving a particular optimization problem, leading to an additional interpretation of the quadratic droop controller as an algorithm which attempts to {\tb minimize a trade-off between} total reactive power dissipation and voltage deviations. We comment on possible extensions and generalizations of these results.

Third and finally, in Section \ref{Sec:PowerShare} we investigate the power sharing properties of our controller, and closely examine two asymptotic limits. In the limit of high controller gains, we show that the reactive power is supplied by an inverter to a load in inverse proportion to the electrical distance between the two, independent of relative controller gains. Conversely, in the low-gain limit we find that inverters provide reactive power in proportion to their controller gains, independent of electrical distance. When neither limit is appropriate, we provide a general formula and quantify the power sharing error as a function of the controller gains and network parameters. {\tb These predictions along with the robustness of the approach are studied numerically in Section \ref{Sec:Sim}.}
%
%
%
%\fdmargin{if you decide to detail remark \ref{Rem:FromQuad}}
{%As a side contribution, our stability results for the quadratic droop controller hold also for the conventional linear droop controller under a particular selection of controller gains. 
Remarks throughout the paper provide details on extensions and interpretations of our results, and can be skipped on a first reading without loss of continuity}.

%In islanded operation, inverters are controlled as grid-forming Voltage Source Inverters, having controlled frequencies and voltage magnitudes \cite{QCZ-TH:13}. To accomplish this, a foundation of control loops must be established to regulate the current, voltage, and output impedance of the inverter. This is achieved via an inner control loop for the current, an outer control loop for the voltage, and a virtual impedance loop ensuring the output impedance is desirable at the line frequency (lower half of Figure \ref{Fig:InvTreeNet}) \cite{JMG-LG-JM-MC-JM:05,QCZ-TH:13}.
 
At a more conceptual level, this work has two major features which distinguish it from the primary literature on droop control. The first is our analytical approach, based on circuit theory \cite{fd-fb:11d} and approximations of nonlinear circuit equations \cite{BG-JWSP-FD-SZ-FB:13zb}. While we make specific modeling assumptions, they are standard in the field \cite{QCZ-TH:13} and allow us to derive strong yet intuitive results regarding stability and optimality of operating points. Our focus is on characterizing the operating points arising from decentralized voltage control, along with characterizing controller performance at the network level. We make use of different load models in different sections of the paper, in part to simplify certain presentations, and in part to illustrate a variety of analysis techniques which should prove useful for other researchers.
The second distinguishing feature is our explicit consideration of loads which are not collocated with inverters. Voltages at non-collocated loads are typically lower then the controlled voltages at inverters, and hence it is these load voltages which ultimately limit network stability. %As we will see, questions regarding existence and stability of microgrid equilibria will reduce to questions regarding the load voltages taking on acceptable values.

The remainder of this section recalls some basic notation. Section \ref{Section: Problem Setup and Review of AC Power Flow} reviews basic models for microgrids, loads, and inverters, along with the conventional droop controller and the relevant literature. Our main results are housed in Sections \ref{Section: Quadratic Droop Control for Voltage Stabilization}--\ref{Sec:Sim}. In Section \ref{Section: Conclusions} we summarize and provide directions for future research.

%Finally, we present sufficient conditions on the controller gains to achieve steady-state power sharing among the inverters.
%
%
%
%
%Small-signal stability analyses for two inverters operating in parallel are presented under various assumptions in
%\cite{EAAC-PCC-PFDG:02,MD-MNM-JWJ-AK:04,MNM-JJW-AK:07,YM-EFE-S:08}
%RM-AG-GL-FZ:09
%and the references therein.%
%
%
%
%
%
%

\subsection{Preliminaries and Notation}
{\it Sets, vectors and functions:} We let $\real$ (resp. $\real_{>0}$) denote the set of real (resp. strictly positive real) numbers, {\tb and let $\boldsymbol{\mathrm{j}}$ be the imaginary unit.}
%Similarly, given $\mathcal{D} \subset \real^n$, let $\mathcal{D}_{>0} \triangleq \mathcal{D} \cap \real^n_{>0}$ be the restriction of $\mathcal{D}$ to the positive orthant. 
Given $x \in \real^{n}$, $[x] \in \real^{n\times n}$ is the associated diagonal matrix with $x$ on the diagonal.
%Given $x,y \in \real^n$, we write $x >\!\!> y$ if $x_i > y_i$ for each $i \in \until{n}$.
%
Throughout, $\vones[n]$ and $\vzeros[n]$ are the $n$-dimensional vectors of unit and zero entries, and $\vzeros[]$ is a matrix of all zeros of appropriate dimensions. The $n \times n$ identity matrix is $I_n$. A matrix $W \in \real^{n \times m}$ is row-stochastic if its elements are nonnegative and $W\vones[m] \!=\! \vones[n]$. 
%For a matrix $A \in \real^{n\times n}$, $\|A\|_{\infty} = \max_{i\in\{1,\ldots,n\}}\sum_{j=1}^n|A_{ij}|$. 
%
A matrix $A \in \real^{n \times n}$ is an $M$-matrix if $A_{ij} \leq 0$ for all $i \neq j$ and all eigenvalues of $A$ have positive real parts. In this case $-DA$ is Hurwitz for any positive definite diagonal $D \in \real^{n \times n}$ (so-called $D$-stability), and $A^{-1} \geq \vzeros[]$ component-wise, with strict inequality if $A$ is irreducible \cite{AB-RJP:94}.

\section{Modeling and Problem Setup}
\label{Section: Problem Setup and Review of AC Power Flow}

We briefly review our microgrid model and recall the conventional voltage droop controller.

\subsection{Review of Microgrids and AC Circuits}
\label{Subsec:Review}

\paragraph{Network Modeling} We adopt the standard model of a quasi-synchronous microgrid as a linear circuit represented by a connected and weighted graph $G(\mathcal{V},\mc E)$, where $\mc V = \until {n+m}$ is the set of vertices (buses) and $\mc E \subseteq \mc V \times \mc V$ is the set of edges (branches). There are two types of buses: $n \geq 1$ load buses $\mathcal{L} = \{1,\ldots,n\}$, and $m \geq 1$ inverter buses $\mathcal{I} = \{n+1,\ldots,n+m\}$, such that $\mathcal{V} = \mathcal{L} \cup \mathcal{I}$. These two sets of buses will receive distinct modeling treatments in what follows.

The edge weights of the graph are the associated branch admittances $y_{ij} = g_{ij} + \boldsymbol{\mathrm{j}}b_{ij} \in \complex$.\footnote[2]{For the short transmission lines of microgrids, line charging and leakage currents are neglected and branches are modeled as series impedances \cite{JM-JWB-JRB:08}. {\tb Shunt capacitors/reactors will be included later as load models.}}
The network is concisely represented by the symmetric bus admittance matrix $Y \in \complex^{(n+m)\times(n+m)}$, where the off-diagonal elements are $Y_{ij} = Y_{ji} = -y_{ij}$ for each branch $\{i,j\} \in \mathcal{E}$ (zero if $\{i,j\} \notin \mathcal{E}$), and the diagonal elements are given by {\tb $Y_{ii} = \sum_{j\neq i}^{n+m} y_{ij}$}. To each bus $i \in \mc V$ we associate a phasor voltage $V_i = E_ie^{\boldsymbol{\mathrm{j}}\theta_i}$ and a complex power injection $S_i = P_{i} + \boldsymbol{\mathrm{j}} Q_i$.
%
%
%
%The linear current-balance equations are $I=YV$, where $I\in\complex^n$ and $V\in\complex^n$ the vectors of nodal current injections and voltages. The complex vector of nodal electrical power injections is then given by $S_{\mathrm{e}} = [V](YV)^*$. 
For dominantly inductive lines, transfer conductances may be neglected and $Y = \boldsymbol{\mathrm{j}}B$ is purely imaginary, where $B \in \real^{(n+m)\times(n+m)}$ is the \emph{susceptance} matrix. {We refer to Remark \ref{Rem:Decoupling} for a discussion of this assumption.} The power flow equations then relate the bus electrical power injections to the bus voltages via
\label{eq: power flow}
\begin{subequations}
\begin{align}
	P_{\mathrm{e},i} &= \sum\nolimits_{j=1}^{n+m} B_{ij} E_{i} E_{j} \sin(\theta_{i} - \theta_{j})
	\,, \;\;\quad i \in \mathcal{V}\,,
	\label{Eq:PFActive}\\
	\Bigl.% for spacing
	Q_{\mathrm{e},i} &= - \sum\nolimits_{j=1}^{n+m} B_{ij} E_{i} E_{j} \cos(\theta_{i} - \theta_{j})
	\,, \;\; i \in \mathcal{V}.
	\label{Eq:PFReactive}
\end{align}
\end{subequations}
In this work we focus on dynamics associated with the reactive power flow equation \eqref{Eq:PFReactive}, and refer the reader to \cite{jwsp-fd-fb:12u,jwsp-qs-fd-jmv-jmg-fb:13s,FD-JWSP-FB:13y,BBJ-SVD-AOH-PTK:14} and the references therein for analyses of microgrid active power/frequency dynamics. We will work under the standard \emph{decoupling approximation}, where $|\theta_i - \theta_j| \approx 0$ and hence $\cos(\theta_i-\theta_j) \approx 1$ for each $\{i,j\} \in \mathcal{E}$; see \cite{TVC-CV:98,JMG-JCV-JM-MC-LGDV:09}. This can be relaxed to non-zero but constant power angles $\theta_i-\theta_j = \gamma_{ij} < \pi/2$ at the cost of {\tb more complicated formulae}, but we do not pursue this here; {\tb see Remark \ref{Rem:Decoupling}}. Under the decoupling assumption, the reactive power injection $Q_{\mathrm{e},i}(\theta,E)$ in \eqref{Eq:PFReactive} becomes a function of only the voltages $E$, yielding
\begin{equation}\label{Eq:RPFEDecoupled}
Q_{\mathrm{e},i}(E) = -E_i\sum_{j=1}^{n+m}\nolimits B_{ij}E_j\,,
\end{equation}
or compactly in vector notation as
\begin{equation}\label{Eq:QVec}
Q = -[E]BE\,,
\end{equation}
where $E = (E_1,\ldots,E_{n+m})$. For later reference, some well-known properties of the susceptance matrix are recorded in Lemma \ref{Lem:Properties}.
\medskip

\smallskip

\paragraph{Load Modeling} When not specified otherwise, we assume a static load model of the form $Q_i(E_i)$, where the reactive power injection $Q_i$ is a smooth function of the supplied voltage $E_i$. With our sign convention, $Q_i(E_i) < 0$ corresponds to an inductive load which consumes reactive power. Specific static and dynamic load models will be used throughout the paper, and we introduce these models when needed.

\paragraph{Inverter Modeling} {\tb A standard grid-side model for a smart inverter is as a controllable voltage source}
\begin{equation}\label{Eq:InverterModel}
\tau_i\dot{E}_i = u_i\,,\qquad i \in \mathcal{I}\,,
\end{equation}
where $u_i$ is a control input {to be designed, and $\tau_i > 0$ is a time constant accounting for sensing, processing, and actuation delays.} {\tb The model \eqref{Eq:InverterModel} assumes that the control loops which regulate the inverter's internal voltages and currents are stable, and that these internal loops are fast compared to the grid-side time scales over which loads change.} This model is widely adopted among experimentalists in the microgrid field \cite{JAPL-CLM-AGM:06,TCG-MP:07,JMG-MC-TL-PCL:13}, and further explanations can be found in \cite{jwsp-qs-fd-jmv-jmg-fb:13s} and references therein. We note that grid-feeding (i.e., maximum power point tracking) photovoltaic inverters or back-to-back converters interfacing wind turbines can be modeled from the grid side as constant power sources, and are therefore included in our framework as loads.

\subsection{Review of Conventional Droop Control}
\label{Subsection: Review of Conventional Droop Controller}
The \emph{voltage droop controller} is a decentralized controller for primary voltage control in islanded microgrids. The controller is a heuristic based on the previously discussed decoupling assumption, and has an extensive history of use; see \cite{MCC-DMD-RA:93,JMG-JCV-JM-LGDV-MC:11,AT-HJ-TU-KM:97,JAPL-CLM-AGM:06,YWL-CNK:09,QCZ:13}.
%EAAC-PCC-PFDG:02
%MD-MNM-JWJ-AK:04
%MNM-JJW-AK:07
%YM-EFE-S:08
%\fdmargin{I would remove the new references [34-37]}
 For the case of inductive lines, the droop controller specifies the input signal $u_i$ in \eqref{Eq:InverterModel} as the local feedback  \cite[Chapter 19]{QCZ-TH:13}\footnote{For grid-connected inverters, one sometimes sees this formula augmented with power set points $u_i = -(E_i-E_i^*)-n_i(Q_{\mathrm{e},i}(E)-Q_i^{\rm set})$. Here we consider islanded operation where $Q_i^{\rm set} = 0$.}
%JMG-LG-JM-MC-JM:05
%
%SB-MC-PP-DP:02
%\begin{subequations}
%\label{eq: droop}
%\begin{align}
%	%\omega_i &= \omega^* - m_iP_{\mathrm{e},i}\,, \;\; \quad i \in \mathcal{I}\,,
%	%\label{Eq:DroopActive}\\
%	%\Bigl.% for spacing
%	u_i &= -(E_i-E_i^*) - n_iQ_{\mathrm{e},i}(E)\,, \;\; \quad i \in \mathcal{I}\,,
%	\label{Eq:DroopReactive}
%\end{align}
%\end{subequations}
%
\begin{equation}\label{Eq:DroopReactive}
u_i = -(E_i-E_i^*) - n_iQ_{\mathrm{e},i}(E)\,, \;\; \quad i \in \mathcal{I}\,,
\end{equation}
where $E_i^* > 0$ is the nominal voltage for the $i^{th}$ inverter, and $Q_{i}(E)$ is the measured reactive power injection; see \cite{ECF-LAA-LABT:08,QCZ-TH:13} for details regarding measurement of active/reactive powers.
%ECF-LAA-LABT:08
%
The controller gain $n_i > 0$ is referred to as the droop coefficient.
From \eqref{Eq:DroopReactive}, it is clear that if the inverter reactive power injection $Q_{\mathrm{e},i}(E)$ is non-zero, the voltage $E_i$ will deviate from $E_i^*$.

\begin{remark}\textbf{(Comments on Modeling, {\tb Decoupling,} and Droop Control)}\label{Rem:Decoupling} 
{\tb
Formally, the decoupling assumption leading to \eqref{Eq:RPFEDecoupled} is an assumption about the \emph{sensitivity} of reactive power injections $Q_{\mathrm{e},i}$ with respect to phase angles $\theta_i-\theta_j$. From \eqref{Eq:PFReactive}, this sensitivity is proportional to $\sin(\theta_i-\theta_j)$, and around normal operating points is therefore roughly zero; phase angles effect reactive power only though second-order effects. The strong relationship between reactive power and voltage magnitudes is why the former is often used as a tool to regulate the latter.}
The voltage droop controller \eqref{Eq:DroopReactive} is designed under the assumption that the susceptance to conductance ratios $b_{ij}/g_{ij}$ are large within the microgrid. This assumption is typically typically justified in engineered settings, as the inverter output impedances are controlled to dominate over network impedances giving the network a strongly inductive characteristic \cite{JMG-LG-JM-MC-JM:05},\cite[Chapter 7]{QCZ-TH:13}. For dominantly resistive (or capacitive) microgrids, the appropriate droop controllers take different forms \cite[Chapter 19]{QCZ-TH:13}, with the following general structure: assuming all lines have uniform reactance/resistance ratios $b_{ij}/g_{ij} = $ const., the controller is modified by replacing $Q_{\mathrm{e},i}$ in \eqref{Eq:DroopReactive} with $\widetilde{Q}_{\mathrm{e},i} = \cos(\varphi)P_{\mathrm{e},i} + \sin(\varphi)Q_{\mathrm{e},i}$, where $\tan(\varphi) = -b_{ij}/g_{ij}$. A standard calculation \cite{FD-JWSP-FB:13y} yields $\widetilde{Q}_{\mathrm{e},i} = -\sum_{j=1}^{n+m}\widetilde{B}_{ij}E_iE_j\cos(\theta_i-\theta_j)$, where $\widetilde{B}$ is an admittance matrix with edge weights $-(b_{ij}^2+g_{ij}^2)^{1/2}$. This formula is of the same mathematical form as \eqref{Eq:PFReactive}, and for such uniform networks, we can restrict ourselves --- without loss of generality --- to the decoupled reactive power flow \eqref{Eq:QVec} and the conventional droop controller \eqref{Eq:DroopReactive}. {\tb As a special case of the above, we note that all analysis herein also applies to voltage/active power droop control for dominantly resistive microgrids.}
%For example, in the case of inductive lines ($b_{ij} < 0$) and small conductances ($g_{ij} <\!\!< 1$), we have $-b_{ij}/g_{ij} >\!\!> 0$, $\varphi \rightarrow \pi/2$. It follows that $\widetilde{Q}_{\mathrm{e},i} = Q_{\mathrm{e},i}$, and we recover the formula \eqref{Eq:DroopReactive}. 
%
%The linearity assumption (ii) may begin to fail in weakly built, low-voltage microgrids, and it is therefore relevant to have an understanding of how droop control performs outside of the linear regime; we pursue this in the following.
\hfill \oprocend \end{remark}

\smallskip

Despite its extensive history, the voltage droop controller \eqref{Eq:DroopReactive} has so far resisted any rigorous stability analysis. In our opinion, the key obstacle has been \textemdash{} and remains \textemdash{} the difficulty in determining the high-voltage equilibrium of the closed-loop system for arbitrary interconnections of devices. We remove this obstacle in the next section by proposing a controller based on the nonlinear physics of AC power flow.

\section{Quadratic Droop Control}
\label{Section: Quadratic Droop Control for Voltage Stabilization}

\subsection{Definition and Interpretation}

While the droop controller \eqref{Eq:DroopReactive} is simple and intuitive, it is based on the {\tb linearized} behavior of AC power flow around the system's open-circuit operating point, and does not respect the inherently quadratic nature of reactive power flow.
%
%
%Indeed, physically one can interpret the feedback term $-\widetilde{C}_i(E_i-E_i^*)$ at inverter $i \in \mathcal{I}$ as the \emph{received power} at a fictitious node with constant voltage $E_i^*$ connected to inverter $i \in \mathcal{I}$ by a line of constant susceptance $-\widetilde{C}_i/E_i^*$.
%
%In this sense, the feedback term is ``non-local'', as it corresponds physically to the power received on the opposite end of a transmission branch.
%
We instead propose a physically-motivated modification of the conventional voltage-droop controller \eqref{Eq:DroopReactive}. In place of \eqref{Eq:DroopReactive}, consider instead the \emph{quadratic droop controller}
\begin{equation}\label{Eq:VoltDroopQuadratic}
\boxed{\Bigl.
u_i = K_iE_i(E_i-E_i^*) - Q_{\mathrm{e},i}(E)\,,\,\quad i \in \mathcal{I}\,,
}
\end{equation}
%\fdmargin{I would define the loop with a minus sign and with $K_{i}>0$. people are not used to negative gains. but then the nice inductive circuit properties are lost, aren't they?}
where $K_i < 0$ is the controller gain.\footnote{While the chosen sign convention $K_i < 0$ is unconventional, it will simplify the resulting formulae and help us interpret the controller physically.} Compared to the conventional controller \eqref{Eq:DroopReactive}, the gain on the regulating term $(E_i-E_i^*)$ in \eqref{Eq:VoltDroopQuadratic} now scales with the  inverter voltage. 
%As we will see in Section \ref{Section: Comparison}, a careful investigation of the quadratic droop controller \eqref{Eq:VoltDroopQuadratic} also gives useful insights into the conventional droop controller \eqref{Eq:DroopODE}.
%
%\fyimargin{I pushed this remark a little bit forward}
%
%
%
%
Combining the power flow \eqref{Eq:RPFEDecoupled} with a static load model $Q_i(E_i)$ at each load bus $i \in \mathcal{L}$, we must also satisfy the power balance equations
\begin{equation}\label{Eq:LoadBuses}
0 = Q_i(E_i) - Q_{\mathrm{e},i}(E)\,,\quad i \in \mathcal{L}\,.
\end{equation}
Combining now the inverter model \eqref{Eq:InverterModel}, the quadratic droop controller \eqref{Eq:VoltDroopQuadratic}, the load power balance \eqref{Eq:LoadBuses}, and the power flow equation \eqref{Eq:QVec}, the closed-loop dynamical system is differential-algebraic, and can be written compactly as
\begin{align}\label{Eq:VoltDroopQuadraticClosed}
\begin{pmatrix} \vzeros[n] \\ \tau_I\dot{E}_I\end{pmatrix} &= \begin{pmatrix}Q_L(E_L) \\ [E_I]K_I(E_I-E_I^*)\end{pmatrix} + [E]BE\,,
\end{align}
where $E_I = (E_{n+1},\ldots,E_{n+m})$, $Q_L(E_L) = (Q_1(E_1),\ldots,Q_n(E_n))$, and $\tau_I$ and $K_I$ are diagonal matrices with elements $\tau_i$ and $K_i$ respectively.
%\begin{align*}
%\tau_I &= \diag(\tau_{n+1},\ldots,\tau_{n+m})\,,\\
%K_I &= \diag(K_{n+1},\ldots,K_{n+m})\,,\\
%E_I &= (E_{n+1},\ldots,E_{n+m})^T\,,\\
%E_I^* &= (E^*_{n+1},\ldots,E^*_{n+m})^T\,,\\
%Q_L^*(E) &= (Q_1^*(E_1),\ldots,Q_n^*(E_n))^T\,.
%\end{align*}
%where $\tau_I = [\{\tau_i\}_{i \in \mathcal{I}}]$, $C_I = [\{C_i\}_{i \in
%    \mathcal{I}}]$, $E_I = \{E_i\}_{i \in \mathcal{I}}$, $E_I^* =
%\{E_i^*\}_{i \in \mathcal{I}}$ and $Q_L = \{Q_i\}_{i \in \mathcal{L}}$.
%
%The closed-loop dynamics \eqref{Eq:VoltDroopQuadraticClosed} highlight the competition in the network between the driving terms in parentheses, which attempt to distort the network voltage profile, and the homogenizing effect of the network $[E]BE$ which attempts to drive the voltages towards the {\tb  open-circuit solution in} the subspace spanned by $\vones[n+m]$. (Lemma \ref{Lem:Properties} (ii)). 
%
Since the variables $E_i$ represent voltage magnitudes referenced to ground, they are intrinsically positive, and for physical consistency we restrict our attention to positive voltage magnitudes. 

%In particular, let $\mathcal{M} \triangleq \setdef{E \in \real^{n+m}}{\eqref{Eq:LoadBuses}\,\,\mathrm{holds}\,\,\forall i \in \mathcal{L}}$ denote the $m$-dimensional manifold in $\real^{n+m}$ where all algebraic equations \eqref{Eq:LoadBuses} hold. We consider \eqref{Eq:VoltDroopQuadraticClosed} to be defined only on the intersection $\mathcal{M}_{>0} \!\triangleq\! \mathcal{M} \cap \real^{n+m}_{>0}$. %{\tb Other solutions are not of physical interest.}

\smallskip

\begin{remark}\textbf{(Interpretations)}\label{Rem:Interpretations}
Before proceeding to our main analysis, we offer two interpretations of the quadratic controller \eqref{Eq:VoltDroopQuadratic}, one theoretical and one pragmatic.

\emph{Circuit-theoretic interpretation:} %In analogy with feedback strategies used in the study of port-Hamiltonian systems \cite{RO-AJvdS-BM-GE:02}, 
The design \eqref{Eq:VoltDroopQuadratic} can be interpreted as \emph{control by interconnection}, where we interconnect the physical electrical network with fictitious ``controller circuits'' at the inverter buses $i \in \mathcal{I}$ \cite{AJvdS:10}.
\begin{figure}[h]
\begin{center}
\includegraphics[width=0.33\columnwidth]{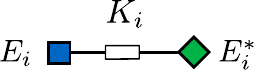}
\caption[Linear circuit representation of the quadratic droop controller \eqref{Eq:VoltDroopQuadratic}.]{Linear circuit representation of the quadratic droop controller \eqref{Eq:VoltDroopQuadratic}.}
\label{Fig:ControllerCircuit}
\end{center}
\end{figure}
\begin{figure*}[thbp]
\begin{center}
\includegraphics[width=2\columnwidth]{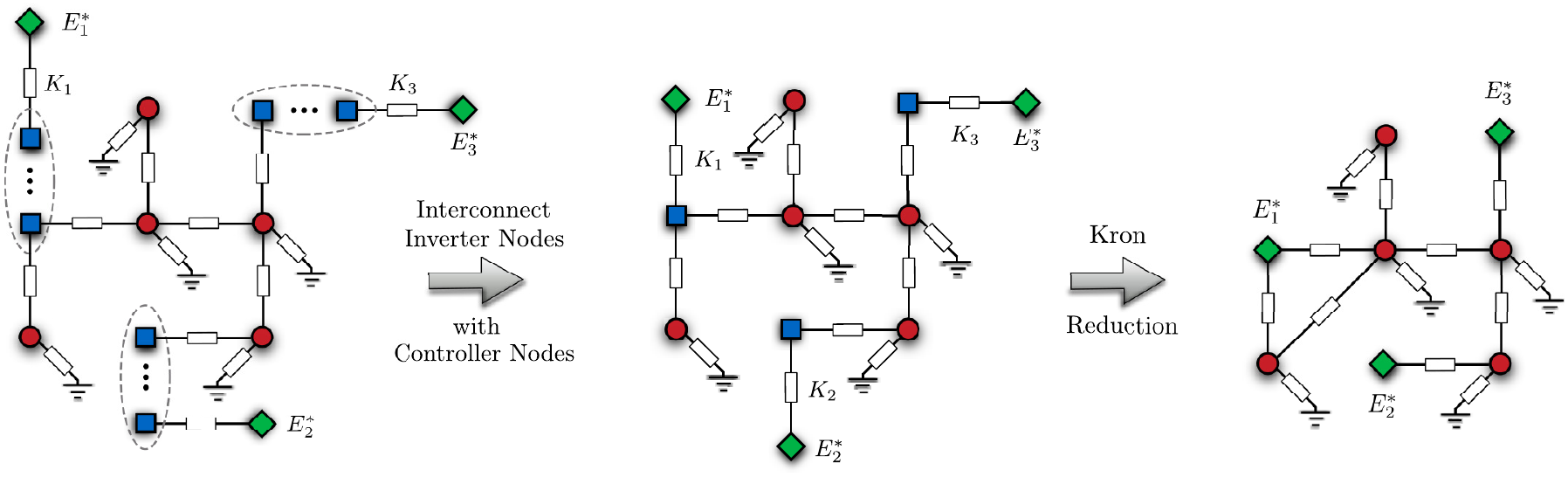}
\caption[Diagram showing network augmentation and reduction. First, each inverter bus of the network in Figure \ref{Fig:Circuit1} is interconnected with a two-bus controller circuit, consisting of an inverter bus and fictitious bus at constant voltage $E_i^*$. In Theorem \ref{Thm:ReducedFixedPoint}, the inverter buses are eliminated via Kron reduction, leaving a reduced network with only fixed voltage buses and load buses.]{Diagram showing network augmentation and reduction. First, each inverter bus {\tikz{\path[draw=black,semithick,fill={rgb:red,0;green,102;blue,196}] (0,0) rectangle (0.12cm,0.12cm);}} of the network in Figure \ref{Fig:Circuit1} is interconnected with a two-bus controller circuit, consisting of an inverter bus {\tikz{\path[draw=black,semithick,fill={rgb:red,0;green,102;blue,196}] (0,0) rectangle (0.12cm,0.12cm);}} and fictitious bus {\tikz \node[shape=diamond,draw=black,fill={rgb:red,0;green,179;blue,70},scale=0.5,] {};} at constant voltage $E_i^*$. In Theorem \ref{Thm:ReducedFixedPoint}, the inverter buses are eliminated via Kron reduction, leaving a reduced network with only fixed voltage buses {\tikz \node[shape=diamond,draw=black,fill={rgb:red,0;green,179;blue,70},scale=0.5,] {};} and load buses {\tikz\draw[black,semithick,fill=red] (0,0) circle (.5ex);}.}
\label{Fig:Circuit2}
\end{center}
\end{figure*}
Indeed, consider the two-bus circuit of Figure \ref{Fig:ControllerCircuit}, where the blue bus has variable voltage $E_i$ and is connected via a susceptance $K_i < 0$ to the green bus of fixed voltage $E_i^*$. The current-voltage relations for this fictitious circuit are
\begin{equation}\label{Eq:ControllerCircuit}
\begin{pmatrix}I_i \\ I_i^*\end{pmatrix} = \boldsymbol{\mathrm{j}}\begin{pmatrix}K_i & -K_i \\ -K_i & K_i\end{pmatrix}\begin{pmatrix}E_i \\ E_i^*\end{pmatrix}\,,
\end{equation}
where $I_i$ (resp. $I_i^*$) is the current injection at the bus with voltage $E_i$ (resp. $E_i^*$). Now, let there be $m$ of the two-bus circuits in Figure \ref{Fig:ControllerCircuit}; one for each inverter. If we identify the variable-voltage blue buses of these circuits with the inverter buses of our original network, and impose that the current injected into the former must exit from the latter, we obtain an augmented network with $n + 2m$ buses, and in vector notation the current-voltage relations in the new network are
\begin{equation}\label{Eq:AugNetwork}
\begin{pmatrix}I_L \\ \vzeros[m] \\ I_I^*\end{pmatrix} = \boldsymbol{\mathrm{j}}\begin{pmatrix}B_{LL} & B_{LI} & \vzeros[]\\ B_{IL} & B_{II} + K_I & -K_I \\ \vzeros[] & -K_I & K_I\end{pmatrix}\begin{pmatrix}E_L \\ E_I \\ E_I^*\end{pmatrix}\,.
\end{equation}
where we have block partitioned all variables according to loads, inverters, and fictitious controller buses. In this augmented circuit, the \emph{inverters behave as interior nodes} joining the fictitious controller buses to the loads, and do not sink or source power themselves. Left multiplying the first two blocks of equations in \eqref{Eq:AugNetwork} by $[E]$ and noting that by the definition of reactive power $\boldsymbol{\mathrm{j}}[E_L]I_L = Q_L(E_L)$, we immediately obtain the right hand side of \eqref{Eq:VoltDroopQuadraticClosed}. {\tb Thus, the closed-loop equilibrium points under quadratic droop control can be interpreted as the solutions of the power balance equations for an expanded linear circuit.}

%Thus, unlike the conventional droop controller \eqref{Eq:DroopReactive}, the quadratic droop controller \emph{preserves the structure of the circuit equations} in closed-loop operation. 

\emph{Practical interpretation: } Far from being linear, voltage/reactive power capability characteristics of synchronous generators display significant nonlinearities. {\tb In the absence of saturation constraints, the characteristics are in fact quadratic  \cite[Equation 3.105]{JM-JWB-JRB:08}, and thus the quadratic droop controller \eqref{Eq:VoltDroopQuadratic} more accurately mimics the behavior of a synchronous generator with automatic voltage regulation, compared to the classical controller \eqref{Eq:DroopReactive}.} {\tb This quadratic dependence means that the marginal voltage drop (voltage drop per unit increase in reactive power) increases with reactive power provided.}
\oprocend
\end{remark}

\begin{remark}\textbf{(Generalizations)}\label{Rem:Generalizations}
The quadratic droop controller \eqref{Eq:VoltDroopQuadratic} is a special case of the general feedback controller
\begin{equation}\label{Eq:GenQuad}
u_i = E_i \sum_{j\in\mc I}\nolimits \left(\alpha_{ij}E_j + \beta_{ij}E_j^*\right)\,,
\end{equation}
where $\alpha_{ij}=\alpha_{ji}$ and $\beta_{ij}$ are gains. One recovers \eqref{Eq:VoltDroopQuadratic} by setting $\alpha_{ii} = K_i$, $\beta_{ii} = -K_i$, and all other parameters to zero. While the decentralized controller \eqref{Eq:VoltDroopQuadratic} can be interpreted as control-by-interconnection with the circuit of Figure \ref{Fig:ControllerCircuit}, the controller \eqref{Eq:GenQuad} represents a more general, densely interconnected circuit with $m$ variable-voltage nodes and $m$ fixed voltage nodes. Since decentralized control strategies are preferable in microgrids, we focus on the decentralized controller \eqref{Eq:VoltDroopQuadratic} with the understanding that results may be extended to the more general \eqref{Eq:GenQuad}. 
\hfill \oprocend
\end{remark}

\subsection{Equilibria and Stability Analysis by Network Reduction}
\label{Subsec: Stability of Closed-Loop System}

%Indeed, consider the equilibria of the closed-loop system \eqref{Eq:VoltDroopQuadraticClosed}. 

%From a circuit-theoretic perspective, we have attached to inverter $i \in \mathcal{I}$ a fictitious node at constant voltage $E_i^*$ via a branch of susceptance $-C_i < 0$ (Figure \ref{Fig:Reduction}). As the voltages $E_i^*$ at these fictitious nodes are fixed, they have the effect of ``stiffening'' the network, and act to oppose deviations in the local inverter voltages $E_i$.
%Alternatively, one can interpret the term $C_iE_i(E_i-E_i^*)$ as a sum of two operations: an additional branch of susceptance $-C_i$ being attached from node $i \in \mathcal{I}$ to ground, accompanied by a local current injection of magnitude $C_iE_i^*$. 
%Thus, in contrast with the conventional controller \eqref{Eq:DroopODE}, the quadratic droop controller \eqref{Eq:VoltDroopQuadratic} specifies a \emph{physically meaningful} additional load at each inverter. 

%
%
%
%$$
%0 = [\dot{E}_L]\left(L_{LL}E_L+L_{LI}E_I\right) + %[E_L]\left(L_{LL}\dot{E}_L+L_{LI}\dot{E}_I\right)
%$$
%

%\fbmargin{figures should appear shortly after being referred to, theoretically}

\smallskip

%\cite[Corollary 5.23]{JML:03}
%\begin{remark}\label{Rem:StateSpace}\textbf{(State Space of Network).}
%%We further exclude the case of any voltage being identically zero. If the voltage at a load bus was identically zero, from \eqref{Eq:LoadBuses} the corresponding power injection must be identically zero, and the node can be eliminated from the network via Kron reduction. Similarly, if the voltage at an inverter bus was ever zero, from \eqref{Eq:VoltDroopQuadratic} it would remain zero for all time with zero power injection, and the node could likewise be eliminated.
%\oprocend \end{remark}

%\smallskip

%
%
%
%

%The most basic question regarding the closed-loop system \eqref{Eq:VoltDroopQuadraticClosed} is the following: 

We first pursue the following question: under what conditions on load, network topology, admittances, and controller gains does the differential-algebraic closed-loop system \eqref{Eq:VoltDroopQuadraticClosed} possess a locally exponentially stable equilibrium? By exploiting the structure of the quadratic droop controller \eqref{Eq:VoltDroopQuadratic}, we will establish a correspondence between the equilibria of \eqref{Eq:VoltDroopQuadraticClosed} and the solutions of a power flow equation for a \emph{reduced} network. In this subsection we focus on generic static load models $Q_i(E_i)$ before addressing  specific load models in Section \ref{Subsec:LoadModels}.

For notational convenience, we first define a few useful quantities. We block partition the susceptance matrix and nodal voltage variables according to loads and inverters as 
\begin{equation}\label{Eq:BPartition}
B = \begin{pmatrix}B_{LL} & B_{LI} \\ B_{IL} & B_{II}\end{pmatrix}\,,\qquad E = (E_L,E_I)\,,
\end{equation}
and define the \emph{reduced susceptance matrix} $B_{\rm red} \in \real^{n \times n}$ by
\begin{equation}\label{Eq:ReducedNetworkGeneral}
B_{\rm red} \triangleq  B_{LL}-B_{LI}\left(B_{II}+K_I\right)^{-1}B_{IL}\,.
\end{equation}
Moreover, we define the \emph{averaging matrices} $W_1 \in \real^{n \times m}$ and $W_2 \in \real^{m \times (n+m)}$ by
\begin{align}
W_1 &\triangleq  -B_{\rm red}^{-1}B_{LI}\left(B_{II}+K_I\right)^{-1}K_I\,,
\label{Eq:A1}\\
W_2 &\triangleq (B_{II}+K_I)^{-1}\begin{pmatrix}-B_{IL} & K_I\end{pmatrix}\,.
\label{Eq:A2}
\end{align}
It can be shown (Proposition  \ref{Prop:AveragingMatrices}) that $B_{\rm red}$ is invertible, and that $W_1$ and $W_2$ are both row-stochastic matrices. Finally we define the \emph{open-circuit load voltages} $E_L^* \in \real^{n}_{>0}$ via
\begin{equation}\label{Eq:Eaverage}
E_L^{*} \triangleq W_1E_I^*\,.
\end{equation}
Since $W_1$ is row-stochastic, each component of $E_L^*$ is a weighted average of inverter set points $E_i^*$.

\smallskip
\begin{theorem}\textbf{(Reduced Power Flow Equation for Quadratic Droop Network)}\label{Thm:ReducedFixedPoint}
Consider the closed-loop system \eqref{Eq:VoltDroopQuadraticClosed} resulting from the quadratic droop controller \eqref{Eq:VoltDroopQuadratic}, along with the definitions \eqref{Eq:ReducedNetworkGeneral}--\eqref{Eq:Eaverage}.
%
%
%\begin{equation}\label{Eq:ReducedNetworkGeneral}
%L_{\rm red} \triangleq L_{\rm red}^T = L_{LL}-L_{LI}\left(C_I+L_{II}\right)^{-1}L_{IL} \in \real^{|\mathcal{L}|\times|\mathcal{L}|} > 0
%\end{equation}
%and
%\begin{equation}\label{Eq:AccompanyGeneral}
%I_{\rm ac} \triangleq -L_{LI}\left(C_I+L_{II}\right)^{-1}C_IE_I^* \in \real^{|\mathcal{L}|}_{\geq0}
%\end{equation}
The following two statements are equivalent:
\begin{enumerate}
\item[(i)] \textbf{Original Network: }The voltage vector $E = (E_L,E_I) \in \real^{n+m}_{>0}$ is an equilibrium point of \eqref{Eq:VoltDroopQuadraticClosed};
\item[(ii)] \textbf{Reduced Network: }The load voltage vector $E_L \in \real^n_{>0}$ is a solution of the reduced power flow equation
\begin{equation}\label{Eq:ReducedFixedPoint}
\vzeros[n] = Q_L(E_L) + [E_L]B_{\rm red}(E_L - E_L^*)\,,
\end{equation}
and the inverter voltage vector is recovered via
%\label{Eq:ExplicitEI}
\begin{align}\label{Eq:InverterVoltages}
E_I &= W_2\begin{pmatrix}E_L \\ E_I^*\end{pmatrix} \in \real^{m}_{>0}\,.
\end{align}
\end{enumerate}
\end{theorem}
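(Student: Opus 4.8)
The plan is to prove the equivalence by a direct algebraic reduction: I would write the equilibrium conditions of the differential-algebraic system \eqref{Eq:VoltDroopQuadraticClosed} as two coupled vector equations, one for the load block and one for the inverter block, and then eliminate the inverter voltages by a Schur-complement (Kron reduction) step. Because every manipulation below is an equivalence under the standing positivity assumption $E \in \real^{n+m}_{>0}$, essentially the same chain of identities establishes both directions (i)$\Rightarrow$(ii) and (ii)$\Rightarrow$(i) simultaneously.

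First I would set $\dot{E}_I = \vzeros[m]$ in \eqref{Eq:VoltDroopQuadraticClosed} and partition the term $[E]BE$ according to \eqref{Eq:BPartition}, obtaining the load equation $\vzeros[n] = Q_L(E_L) + [E_L]\left(B_{LL}E_L + B_{LI}E_I\right)$ and the inverter equation $\vzeros[m] = [E_I]\left(K_I(E_I-E_I^*) + B_{IL}E_L + B_{II}E_I\right)$. The next step handles the inverter block: since $E_I \in \real^m_{>0}$ the diagonal matrix $[E_I]$ is invertible and may be cancelled, leaving the linear system $(B_{II}+K_I)E_I = -B_{IL}E_L + K_I E_I^*$. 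Invoking the invertibility of $B_{II}+K_I$ from Proposition \ref{Prop:AveragingMatrices} together with the definition \eqref{Eq:A2} of $W_2$ yields exactly the recovery formula \eqref{Eq:InverterVoltages}.

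With $E_I$ expressed through $E_L$, I would substitute $E_I = (B_{II}+K_I)^{-1}\left(K_I E_I^* - B_{IL}E_L\right)$ into the load equation. The matrix acting on $E_L$ then collapses to $B_{LL} - B_{LI}(B_{II}+K_I)^{-1}B_{IL} = B_{\rm red}$ by definition \eqref{Eq:ReducedNetworkGeneral}, while the residual inhomogeneous term is $B_{LI}(B_{II}+K_I)^{-1}K_I E_I^*$. The one step that deserves care is recognizing that this residual equals $-B_{\rm red}E_L^*$: this is precisely the content of definitions \eqref{Eq:A1} and \eqref{Eq:Eaverage}, since $E_L^* = W_1 E_I^* = -B_{\rm red}^{-1}B_{LI}(B_{II}+K_I)^{-1}K_I E_I^*$. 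Factoring out $B_{\rm red}$ then gives $B_{LL}E_L + B_{LI}E_I = B_{\rm red}(E_L - E_L^*)$, and substituting back produces the reduced power flow equation \eqref{Eq:ReducedFixedPoint}.

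It remains to address positivity, which is where the two directions differ and which I expect to be the only genuinely nontrivial point. In the direction (i)$\Rightarrow$(ii) the positivity of $E_I$ is inherited directly from the hypothesis $E \in \real^{n+m}_{>0}$. In the converse direction (ii)$\Rightarrow$(i) one is handed only $E_L \in \real^n_{>0}$ and must verify that the reconstructed $E_I$ from \eqref{Eq:InverterVoltages} is strictly positive; this follows because $W_2$ is row-stochastic (Proposition \ref{Prop:AveragingMatrices}), so each entry of $E_I = W_2(E_L,E_I^*)$ is a convex combination of the strictly positive entries of $E_L$ and $E_I^*$, hence strictly positive. Assembling the reduced load equation \eqref{Eq:ReducedFixedPoint}, the inverter recovery \eqref{Eq:InverterVoltages}, and this positivity check completes the equivalence.
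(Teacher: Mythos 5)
Your proposal is correct and follows essentially the same route as the paper's own proof: cancel $[E_I]$ in the inverter block using $E_I \in \real^m_{>0}$, solve the resulting linear system $(B_{II}+K_I)E_I = K_IE_I^* - B_{IL}E_L$ to obtain \eqref{Eq:InverterVoltages}, substitute into the load block so that the definitions \eqref{Eq:ReducedNetworkGeneral}--\eqref{Eq:Eaverage} collapse it to \eqref{Eq:ReducedFixedPoint}, and recover strict positivity of $E_I$ in the converse direction from the row-stochasticity of $W_2$. Your identity $B_{LI}(B_{II}+K_I)^{-1}K_IE_I^* = -B_{\rm red}E_L^*$ and the convex-combination argument for $E_I > \vzeros[m]$ match the paper's reasoning exactly (the paper attributes the positivity step to Proposition \ref{Prop:AveragingMatrices}, just as you do), so there is nothing to add.
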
\medskip

Theorem \ref{Thm:ReducedFixedPoint} states that, to study the existence and uniqueness of equilibria for the closed-loop system \eqref{Eq:VoltDroopQuadraticClosed}, we need only study the reduced power flow equation \eqref{Eq:ReducedFixedPoint}. Voltages at inverters may then be recovered uniquely from \eqref{Eq:InverterVoltages}. In fact, since $W_2$ is row-stochastic (Proposition \ref{Prop:AveragingMatrices}), the inverter voltages \eqref{Eq:InverterVoltages} are simply weighted averages of load voltages $E_L$ and inverter set point voltages $E_I^*$. 

The reduced power flow equation \eqref{Eq:ReducedFixedPoint} is straightforward to interpret in terms of the circuit-reduction of Figure \ref{Fig:Circuit2}. First, as in Remark \ref{Rem:Interpretations}, we augment the original circuit with fictitious controller buses. Eliminating the inverter voltages $E_I$ from the augmented network current balance \eqref{Eq:AugNetwork} through Kron reduction \cite{fd-fb:11d}, one obtains the input/output equivalent circuit
\begin{equation}\label{Eq:Interp1}
\begin{pmatrix}I_L \\ I_I^*\end{pmatrix} = \boldsymbol{\mathrm{j}}\begin{pmatrix}B_{\rm red} & -B_{\rm red}W_1 \\ -W_1^{\sf T}B_{\rm red} & K_I(B_{II}+K_I)^{-1}B_{II}\end{pmatrix}\begin{pmatrix}E_L \\ E_I^*\end{pmatrix}\,.
\end{equation}
This reduction process is shown in Figure \ref{Fig:Circuit2}. Left-multiplying the first block in \eqref{Eq:Interp1} by $[E_L]$ immediately yields the reduced power flow \eqref{Eq:ReducedFixedPoint}. Hence \eqref{Eq:ReducedFixedPoint} is exactly the reactive power balance equation $Q_L = [E_L]I_L$ in the reduced network of Figure \ref{Fig:Circuit2} after reduction (cf. \cite[Equation 2.10b]{TVC-CV:98}). 
The bottom block of equations in \eqref{Eq:Interp1} determines the fictitious controller current injections once the top block is solved for $E_L$.

%That the quantities \eqref{Eq:ReducedNetworkGeneral}--\eqref{Eq:A2} are in fact well-defined follows from Proposition \ref{Prop:AveragingMatrices}, the proof of which we omit for brevity.

\begin{pfof}{Theorem \ref{Thm:ReducedFixedPoint}}
\textbf{(i)}$\Rightarrow$\textbf{(ii): } %Since $E = (E_L,E_I)^T \in \mathcal{M}_{>0}$, it follows by definition that that $E_L \in \real^{n}_{>0}$ and $E_I \in \real^{m}_{>0}$. 
Setting the left-hand side of the closed-loop system \eqref{Eq:VoltDroopQuadraticClosed} to zero, equilibrium points satisfy
\begin{align}\label{Eq:NonReducedFixedPoint}
\vzeros[n+m] &= \begin{pmatrix}Q_L(E_L) \\ [E_I]K_I(E_I-E_I^*)\end{pmatrix} \\\nonumber
&\quad\,\,+ \begin{pmatrix}[E_L] & \vzeros[] \\ \vzeros[] & [E_I]\end{pmatrix}\begin{pmatrix}B_{LL} & B_{LI} \\ B_{IL} & B_{II}\end{pmatrix}\begin{pmatrix}E_L \\ E_I\end{pmatrix}.
\end{align}
Since $E \in \real^{n+m}_{>0}$ solves \eqref{Eq:NonReducedFixedPoint}, we can left-multiply the lower block of equations in \eqref{Eq:NonReducedFixedPoint} by $[E_I]^{-1}$ to obtain
\begin{equation}\label{Eq:ZeroI}
\vzeros[m] = K_I(E_I-E_I^*) + B_{II}E_I + B_{IL}E_L\,,
\end{equation}
and solve for $E_I$ to obtain the inverter voltages \eqref{Eq:InverterVoltages}. Substituting \eqref{Eq:InverterVoltages} into the first block of equations in \eqref{Eq:NonReducedFixedPoint}, we calculate
\begin{align*}
-Q_L(E_L) &=  [E_L]B_{LL}E_L\\ &\quad- [E_L]B_{LI}(B_{II}+K_I)^{-1}\left(B_{IL}E_L-K_IE_I^*\right)\\
&= [E_L]\underbrace{(B_{LL}-B_{LI}(B_{II}+K_I)^{-1}B_{IL})}_{B_{\rm red}}E_L\\
&\quad - [E_L]\underbrace{B_{LI}(B_{II}+K_I)^{-1}K_IE_I^*}_{B_{\rm red}E_L^*}\\
&= [E_L]B_{\rm red}(E_L-E_L^*)\,,
\end{align*}
which is the reduced power flow equation \eqref{Eq:ReducedFixedPoint}.\newline
\textbf{(ii)}$\Rightarrow$\textbf{(i): } Due to \eqref{Eq:InverterVoltages} and Proposition \ref{Prop:AveragingMatrices} (iii), 
%$E_L \in \real^{|\mathcal{I}|}_{>0} \Longrightarrow E_I \in \real^{|\mathcal{I}|}_{>0}$ 
we have that $E_L \in \real^{n}_{>0}$ implies that $E_I \in \real^{m}_{>0}$ and hence $E \in \real^n_{> 0}$. An easy computation shows that \eqref{Eq:ReducedFixedPoint} and \eqref{Eq:InverterVoltages} together imply that $(E_L,E_I)$ satisfy the fixed-point equations
\eqref{Eq:NonReducedFixedPoint}, and thus $E \in \real^{n+m}_{>0}$ is an equilibrium point. 
\end{pfof}

In a similar spirit, the following result states that local exponential stability of an equilibrium point of \eqref{Eq:VoltDroopQuadraticClosed} may be checked by studying a \emph{reduced} Jacobian matrix.

\begin{theorem}\textbf{(Stability from Reduced Jacobian)}\label{Thm:Stab}
Consider the closed-loop system \eqref{Eq:VoltDroopQuadraticClosed} resulting from the quadratic droop controller \eqref{Eq:VoltDroopQuadratic}. If the Jacobian of the reduced power flow equation \eqref{Eq:ReducedFixedPoint}, given in vector notation by
\begin{equation}\label{Eq:ThmJac}
J_{\rm red}(E_L) = \frac{\partial Q_L}{\partial E_L}(E_L) + [E_L]B_{\rm red} + [B_{\rm red}(E_L-E_L^*)]\,,
\end{equation}
is a Hurwitz matrix when evaluated at a solution $E_L \in \real^{m}_{>0}$ of \eqref{Eq:ReducedFixedPoint}, then the equilibrium point $(E_L,E_I)$ of the differential-algebraic system \eqref{Eq:VoltDroopQuadraticClosed} is locally exponentially stable. {\tb Moreover, assuming that $\frac{\partial Q_i(E_i)}{\partial E_i} \leq 0$ for each load bus $i \in \mathcal{L}$, a sufficient condition for \eqref{Eq:ThmJac} to be Hurwitz is that
\begin{equation}\label{Eq:SuffImplicitStability}
B_{\rm red} \prec [E_L]^{-2}[Q_L(E_L)]\,.
\end{equation}
}
\end{theorem}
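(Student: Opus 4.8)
The plan is to linearize the differential--algebraic system \eqref{Eq:VoltDroopQuadraticClosed} at the equilibrium $(E_L,E_I)$ and reduce it to an ordinary differential equation on the algebraic constraint manifold, following the linearization technique of \cite{BG-JWSP-FD-SZ-FB:13zb}. Writing the right-hand side of \eqref{Eq:VoltDroopQuadraticClosed} as $F=(F_L,F_I)$, its Jacobian at the equilibrium has a $2\times 2$ block form with blocks $J_{LL},J_{LI},J_{IL},J_{II}$. The first step is a direct computation of these blocks. The only subtlety is that, after differentiating the products $[E_L](\cdots)$ and $[E_I](\cdots)$, each \emph{diagonal} block acquires a spurious diagonal term; invoking the equilibrium identities $Q_L(E_L)+[E_L](B_{LL}E_L+B_{LI}E_I)=\vzeros[n]$ and \eqref{Eq:ZeroI} makes these terms vanish, leaving $J_{LL}=\tfrac{\partial Q_L}{\partial E_L}+[E_L]B_{LL}-[E_L]^{-1}[Q_L]$, $\;J_{LI}=[E_L]B_{LI}$, $\;J_{IL}=[E_I]B_{IL}$, and $J_{II}=[E_I](B_{II}+K_I)$.

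The crucial observation --- and the step I expect to carry the whole argument --- is that this Jacobian factors as $\mathcal J=[E]\,M$, where $[E]=\mathrm{blkdiag}([E_L],[E_I])\succ 0$ and
\[
M = \begin{pmatrix}
[E_L]^{-1}\tfrac{\partial Q_L}{\partial E_L} - [E_L]^{-2}[Q_L] + B_{LL} & B_{LI} \\
B_{IL} & B_{II}+K_I
\end{pmatrix}
\]
is \emph{symmetric} (its off-diagonal blocks satisfy $B_{IL}=B_{LI}^{\transpose}$, and both diagonal blocks are symmetric). Consequently $\mathcal J$ is similar to the symmetric matrix $[E]^{1/2}M[E]^{1/2}$, so every Schur complement we form has real spectrum, and Sylvester's law of inertia lets us track sign information through congruences.

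Next I would identify the two relevant Schur complements of $M$. A short computation using \eqref{Eq:ReducedNetworkGeneral} gives $J_{\rm red}=[E_L]\,(M/M_{II})$, the Schur complement with respect to the inverter block $M_{II}=B_{II}+K_I$; since $J_{\rm red}$ is similar to the symmetric $[E_L]^{1/2}(M/M_{II})[E_L]^{1/2}$, the hypothesis ``$J_{\rm red}$ Hurwitz'' is \emph{equivalent} to $M/M_{II}\prec 0$. On the other hand, eliminating the algebraic variables $E_L$ from the linearized DAE yields the reduced ODE $\delta\dot E_I=\tau_I^{-1}[E_I](M/M_{LL})\,\delta E_I$, whose exponential stability (and the index-$1$ property) follows once $M_{LL}\prec 0$ and $M/M_{LL}\prec 0$. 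The task thus reduces to the implication $M/M_{II}\prec 0\Rightarrow M/M_{LL}\prec 0$. Here I invoke the structural fact (Proposition \ref{Prop:AveragingMatrices}, using $K_I\prec 0$ together with $-B\succeq 0$) that $M_{II}=B_{II}+K_I\prec 0$, so its inertia, written as (number of positive, zero, negative eigenvalues), is $\mathrm{In}(M_{II})=(0,0,m)$. Haynsworth inertia additivity then gives $\mathrm{In}(M)=\mathrm{In}(M_{II})+\mathrm{In}(M/M_{II})=(0,0,n+m)$, i.e.\ $M\prec 0$; applying additivity in the other order, $\mathrm{In}(M)=\mathrm{In}(M_{LL})+\mathrm{In}(M/M_{LL})$ with $M_{LL}\prec 0$ (a principal submatrix of a negative definite matrix) forces $M/M_{LL}\prec 0$, and local exponential stability follows.

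For the sufficient condition I would note that $M/M_{II}=[E_L]^{-1}\tfrac{\partial Q_L}{\partial E_L}+B_{\rm red}-[E_L]^{-2}[Q_L]$ coincides with $[E_L]^{-1}J_{\rm red}$ (cf.\ \eqref{Eq:ThmJac}). When $\tfrac{\partial Q_i}{\partial E_i}\le 0$ for every load, the diagonal term $[E_L]^{-1}\tfrac{\partial Q_L}{\partial E_L}\preceq 0$, hence $M/M_{II}\preceq B_{\rm red}-[E_L]^{-2}[Q_L]$; so condition \eqref{Eq:SuffImplicitStability} forces $M/M_{II}\prec 0$, i.e.\ $J_{\rm red}$ Hurwitz. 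The main obstacle lies entirely in the symmetric factorization $\mathcal J=[E]M$: once it is in hand, the passage between the hypothesis ($M/M_{II}$) and the genuine DAE dynamics ($M/M_{LL}$) is clean inertia bookkeeping, whereas without it the two Schur complements live in different dimensions and cannot be compared directly.
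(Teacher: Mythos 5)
Your proposal is correct and follows essentially the same route as the paper's proof: the paper likewise factors the linearized system as $[E]M$ with $M=M^{\transpose}$ (its Eq.~\eqref{Eq:MMatrix}), identifies $[E_L]^{-1}J_{\rm red}$ with the Schur complement of $M$ with respect to the block $B_{II}+K_I$, deduces $M\prec 0$ from definiteness of that block together with its Schur complement (your Haynsworth bookkeeping is the same two standard Schur-complement facts), concludes that the Schur complement with respect to the load block (your $M/M_{LL}$, the paper's $M_{\rm red}$) is negative definite, and eliminates the algebraic variables --- phrased there as a symmetric generalized eigenvalue problem with DAE stability justified by a citation to \cite{RR:04} --- with the moreover statement proved by the identical substitution of \eqref{Eq:ReducedFixedPoint}. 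The one local difference is how ``$J_{\rm red}$ Hurwitz'' is converted into $M/M_{II}\prec 0$: you use the similarity $J_{\rm red}=[E_L](M/M_{II})\sim[E_L]^{1/2}(M/M_{II})[E_L]^{1/2}$, whereas the paper invokes the $M$-matrix/$D$-stability of $-J_{\rm red}$ (which requires the nonnegative off-diagonal entries of $J_{\rm red}$, i.e., Proposition~\ref{Prop:AveragingMatrices}), so your step is marginally cleaner and dispenses with that sign-structure hypothesis.
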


\begin{pfof}{Theorem \ref{Thm:Stab}} We appeal to \cite[Theorem 1]{RR:04}, which states that local stability of the differential-algebraic system \eqref{Eq:VoltDroopQuadraticClosed} at the equilibrium $E \in \real^{n+m}_{>0}$ may be studied by linearizing the differential algebraic system, eliminating the algebraic equations from the system matrix, and checking that the resulting reduced matrix is Hurwitz. Consider the generalized eigenvalue problem (GEP) $Jv = \lambda \tau v$ where $v \in \real^{n}$, $\tau = \mathrm{blkdiag}(\vzeros[],\tau_I)$ and Jacobian matrix $J$ of \eqref{Eq:VoltDroopQuadraticClosed} is given by
\begin{equation}\label{Eq:QuadraticDroopJacobian}
J = [E]B + [BE] + D\,.
\end{equation}
The diagonal matrix $D \in \real^{(n+m)\times (n+m)}$ has elements $D_{ii} = \mathrm{d}Q_i(E_i)/\mathrm{d}E_i$
%= 2b_{\mathrm{load},i}E_i + I_{\mathrm{load},i}
for $i \in \mathcal{L}$ and $D_{ii} = K_{i}(E_{i}-E_{i}^*) + K_iE_i$ for $i \in \mathcal{I}$. Since $E$ is strictly positive, we left-multiply through by $[E]^{-1}$ and formulate the previous GEP as the symmetric GEP
\begin{equation}
	Mv = \lambda[E]^{-1}\tau v\,,
	\label{eq: generalized EV problem}
\end{equation}
where 
\begin{align}\label{Eq:MMatrix}
M = M^{\sf T} \triangleq B + [E]^{-1}([BE]+D)\,.
\end{align}
Partitioning the eigenvector $v$ as $v = (v_L,v_I)$, block partitioning $M$, and eliminating the top set of algebraic equations, we arrive at a reduced GEP $M_{\rm red}v_I = \lambda[E_I]^{-1}\tau_I v_I$ where $M_{\rm red} = M_{\rm red}^{\sf T} \triangleq M_{II}-M_{IL}M_{LL}^{-1}M_{LI}$. 
%
%By \cite[Lemma 1]{RR:04}, the equilibrium point $E$ is locally exponentially stable if and only if the eigenvalues of this reduced GEP are all in the open left-half complex plane.
%
%
%
%
The matrices on both sides are symmetric, and in particular the matrix $[E_I]^{-1}\tau_I$ on the right is diagonal and positive definite. The eigenvalues $\{\lambda_i\}_{i\in \mathcal{I}}$ of this reduced GEP are therefore real, and it holds that $\lambda_i < 0$ for each $i \in \mathcal{I}$ if and only if $M_{\rm red}$ is negative definite.
%\cite[Chapter 6]{HG-CP-JS:00}
We now show indirectly that $-M_{\rm red}$ is positive definite, by combining two standard results on Schur complements. Through some straightforward computations, one may use \eqref{Eq:ReducedNetworkGeneral}, \eqref{Eq:Eaverage}, \eqref{Eq:InverterVoltages} and \eqref{Eq:ZeroI} to simplify $M$ to
%\fdmargin{did not check the details of the ``straightforward computation'' as well as the calculation of the Schur complement. trust you on that one}
$$
M = \begin{pmatrix}M_{11} & B_{LI} \\ B_{IL} & B_{II}+ K_I\end{pmatrix}\,,
$$
where $M_{11} = [E_L]^{-1}(\partial Q_L/\partial E_L) + B_{LL} + [E_L]^{-1}[B_{\rm red}(E_L-E_L^*)]$. Since the bottom-right block $-(B_{II}+K_I)$ of $-M$ is positive definite, $-M$ will be positive definite if and only if the Schur complement with respect to this bottom-right block is also positive definite. This Schur complement is equal to 
\begin{equation}\label{Eq:Schur1}
-[E_L]^{-1}\frac{\partial Q_L}{\partial E_L}(E_L) - B_{\rm red} - [E_L]^{-1}[B_{\rm red}(E_L-E_L^*)]\,,
\end{equation}
%\fdmargin{there's a missing part in the proof: if the reduced GEP is stable, what does this imply for the DAE? or is this in \cite[Theorem 1]{RR:04}? As far as I know, you can make claims only for the symmetric case}
which is exactly $-[E_L]^{-1}$ times the Jacobian \eqref{Eq:ThmJac} of \eqref{Eq:ReducedFixedPoint}. Since $J_{\rm red}$ is Hurwitz with nonnegative off-diagonal elements, $-J$ is an $M$-matrix and is therefore $D$-stable. It follows that the Schur complement \eqref{Eq:Schur1} is positive definite, and hence that $M_{\rm red}$ is positive definite, which completes the proof of the main statement. %Exponential stability of the equilibrium point $(E_L,E_I)$ for the DAE \eqref{Eq:VoltDroopQuadraticClosed} now follows from \cite[Theorem 1]{RR:04}. \hfill 
{\tb For the moreover statement, proceed along similar arguments and consider the symmetric version $[E_L]^{-1}J_{\rm red}(E_L)$ of \eqref{Eq:ThmJac}, which equals
$$
[E_L]^{-1}\frac{\partial Q_L}{\partial E_L}(E_L) + B_{\rm red} + [E_L]^{-1}[B_{\rm red}(E_L-E_L^*)]\,.
$$
Solving the reduced power flow equation \eqref{Eq:ReducedFixedPoint} for $B_{\rm red}(E_L-E_L^*)$ and substituting into the third term above, we find that
$$
[E_L]^{-1}J_{\rm red} = [E_L]^{-1}\frac{\partial Q_L}{\partial E_L}(E_L) + B_{\rm red} - [E_L]^{-2}[Q_L(E_L)]\,.
$$
The first term is diagonal and by assumption negative semidefinite, and hence $J_{\rm red}(E_L)$ will be Hurwitz if \eqref{Eq:SuffImplicitStability} holds, which completes the proof.
}
\end{pfof}

\smallskip

{\tb The sufficient condition \eqref{Eq:SuffImplicitStability} is intuitive: it states that at equilibrium, the network matrix $B_{\rm red}$ should be more susceptive than the equivalent load susceptances $Q_i/E_i^2$. The results of Theorem \ref{Thm:Stab} are implicit}, in that checking stability depends on the undetermined solutions of the reduced power flow equation \eqref{Eq:ReducedFixedPoint}. The situation will become clearer in Section \ref{Subsec:LoadModels} when we apply Theorems \ref{Thm:ReducedFixedPoint} and \ref{Thm:Stab} to specific load models.

\smallskip

\begin{remark}\textbf{(Parallel Microgrids)}\label{Rem:Parallel}
In \cite{jwsp-fd-fb:13h} we provided an extensive analysis of the closed loop \eqref{Eq:VoltDroopQuadraticClosed} for a ``parallel'' or ``star''  topology, where all inverters feed  a single common load. In particular, for a constant power load we provided a necessary and sufficient condition for the existence of a stable equilibrium. Depending on the system parameters, the microgrid can have zero, one, or two physically meaningful equilibria, and displays both saddle-node and singularity-induced bifurcations. Perhaps surprisingly, we show that even for a star topology, overall network stability is \emph{not} equivalent to pair-wise stability of each inverter with the common load.
\hfill \oprocend
\end{remark}

%Note that while the number of nodes in the original network and in the reduced network are equal, the topologies are not necessarily the same \cite[Theorem 3.4]{fd-fb:11d}.
%
%This reduction process is shown in Figure \ref{Fig:Circuit2}, and in Figure \ref{Fig:Reduction} for the special case of a parallel microgrid. %From Proposition \ref{Prop:AveragingMatrices} (ii) and (iv), $E_L^*$ as given by \eqref{Eq:Eaverage} is a vector of weighted averages of inverter set points as seen from the respective load nodes. Similarly, from \eqref{Eq:InverterVoltages} the inverter voltages $E_I$ are given by a weighted average of their set points and the load voltages.
%\fdmargin{why did you omit the third graphic in Figure \ref{Fig:Reduction} showing $\subscr{L}{red}$ and $\subscr{E}{avg}^{*}$ ?}

\smallskip

\subsection{Equilibria and Stability Conditions for ZI, ZIP, and Dynamic Shunt Load Models}
\label{Subsec:LoadModels}

In this section we leverage the general results of Section \ref{Subsec: Stability of Closed-Loop System} to study the equilibria and stability of the microgrid for some specific, tractable load models. To begin, the reduced power flow equation \eqref{Eq:ReducedFixedPoint} can be solved \emph{exactly} for combinations of constant-impedance and constant-current loads.

\begin{theorem}\textbf{(Stability with ``ZI'' Loads)}\label{Lem:ZILoads}
Consider the reduced power flow equation \eqref{Eq:ReducedFixedPoint} for constant-impedance/constant-current loads
\begin{equation}\label{Eq:ZILoad}
Q_L^{\rm ZI}(E_L) = [E_L][b_{\rm shunt}]E_L + [E_L]I_{\rm shunt}\,,
\end{equation}
where $b_{\rm shunt} \in \real^n$ (resp. $I_{\rm shunt } \in \real^{n}$) is the vector of constant-impedance loads (resp. constant current loads). Assume that %
\begin{enumerate}
\item[(i)] $-(B_{\rm red}+[b_{\rm shunt}])$ is an $M$-matrix\,, and
\item[(ii)] $I_{\rm shunt} > B_{\rm red}E_L^*$ component-wise\,.
\end{enumerate}
Then the unique solution $E_L^{\rm ZI} \in \real_{>0}^{n}$ to \eqref{Eq:ReducedFixedPoint} is given by
\begin{align}\label{Eq:ExactSolutionZILoad}
E_L^{\rm ZI} &= (B_{\rm red}+[b_{\rm shunt}])^{-1}(B_{\rm red}E_L^*-I_{\rm shunt})\,,
%&= E_L^* + L_{\rm red}^{-1}I_{\rm loads}\\
%&\,\,\,\,+ L_{\rm red}^{-1}(I-[b_{\rm loads}]L_{\rm red}^{-1})^{-1}[b_{\rm loads}]E_L^*\\
%&\,\,\,\,+ L_{\rm red}^{-1}(I-[b_{\rm loads}]L_{\rm red}^{-1})^{-1}[b_{\rm loads}]L_{\rm red}^{-1}I_{\rm load}\\
%&= E_L^* + L_{\rm red}^{-1}I_{\rm loads}\\
%&\,\,\,\,+ (L_{\rm red}-[b_{\rm loads}])^{-1}[b_{\rm loads}]E_L^*\\
%&\,\,\,\,+ (L_{\rm red}-[b_{\rm loads}])^{-1}[b_{\rm loads}]L_{\rm red}^{-1}I_{\rm load}\\
%&= (I_{|\mathcal{L}|} + L_{\rm red}^{-1}(I_{|\mathcal{L}|}-[b_{\rm loads}]L_{\rm red}^{-1})^{-1})(E_L^*+L_{\rm red}^{-1}I_{\rm loads})\\
%&= E^{*}_{\rm avg} + L_{\rm red}^{-1}I_{\rm loads}\\
%&\,\,\,\,+ L_{\rm red}^{-1}(I_{|\mathcal{L}|}-[b_{\rm loads}]L_{\rm red}^{-1})^{-1}(E_L^*+L_{\rm red}^{-1}I_{\rm loads})
\end{align}
and the associated equilibrium point $(E_L^{\rm ZI},E_I^{\rm ZI})$ of the closed-loop system \eqref{Eq:VoltDroopQuadraticClosed} is locally exponentially stable.
\end{theorem}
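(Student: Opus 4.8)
The plan is to exploit the special structure of the ZI load model, which renders the reduced power flow equation \eqref{Eq:ReducedFixedPoint} \emph{linear} once the common factor $[E_L]$ is stripped off. First I would substitute \eqref{Eq:ZILoad} into \eqref{Eq:ReducedFixedPoint} to obtain $\vzeros[n] = [E_L]\bigl([b_{\rm shunt}]E_L + I_{\rm shunt} + B_{\rm red}(E_L - E_L^*)\bigr)$. Since we restrict attention to $E_L \in \real^n_{>0}$, the matrix $[E_L]$ is invertible, so on the positive orthant this equation is equivalent to the linear system $(B_{\rm red}+[b_{\rm shunt}])E_L = B_{\rm red}E_L^* - I_{\rm shunt}$. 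Under assumption (i), $-(B_{\rm red}+[b_{\rm shunt}])$ is a nonsingular $M$-matrix, hence $B_{\rm red}+[b_{\rm shunt}]$ is invertible and the unique solution is exactly \eqref{Eq:ExactSolutionZILoad}.

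Next I would verify that this candidate lies in $\real^n_{>0}$. Writing $A \triangleq -(B_{\rm red}+[b_{\rm shunt}])$ and $c \triangleq I_{\rm shunt} - B_{\rm red}E_L^*$, the solution reads $E_L^{\rm ZI} = A^{-1}c$. By the $M$-matrix property recalled in the preliminaries, $A^{-1} \geq \vzeros[]$ component-wise, while assumption (ii) gives $c > \vzeros[]$ strictly. Since $A$ is nonsingular, no row of $A^{-1}$ can vanish, so each row carries a strictly positive entry; paired with $c > \vzeros[]$ this forces $E_L^{\rm ZI} > \vzeros[]$ strictly, establishing both uniqueness and positivity of the equilibrium.

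For stability I would invoke Theorem \ref{Thm:Stab} by evaluating and simplifying the reduced Jacobian \eqref{Eq:ThmJac} at $E_L^{\rm ZI}$. The load derivative is diagonal, $\partial Q_L/\partial E_L = 2[b_{\rm shunt}][E_L] + [I_{\rm shunt}]$, and the equilibrium condition supplies $B_{\rm red}(E_L - E_L^*) = -[b_{\rm shunt}]E_L - I_{\rm shunt}$, so that $[B_{\rm red}(E_L-E_L^*)] = -[b_{\rm shunt}][E_L] - [I_{\rm shunt}]$. Substituting both into \eqref{Eq:ThmJac}, the $[I_{\rm shunt}]$ terms cancel and one $[b_{\rm shunt}][E_L]$ term is consumed, collapsing the Jacobian to $J_{\rm red}(E_L^{\rm ZI}) = [E_L^{\rm ZI}](B_{\rm red}+[b_{\rm shunt}]) = -[E_L^{\rm ZI}]A$. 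Since $[E_L^{\rm ZI}] \succ 0$ is diagonal and $A$ is the $M$-matrix of assumption (i), the $D$-stability property of $M$-matrices quoted in the preliminaries shows $-[E_L^{\rm ZI}]A$ is Hurwitz; Theorem \ref{Thm:Stab} then yields local exponential stability of $(E_L^{\rm ZI},E_I^{\rm ZI})$.

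The Jacobian simplification is routine algebra once the equilibrium identity is used. The only step requiring genuine care is the strict positivity of $E_L^{\rm ZI}$: one must combine the nonsingularity of the $M$-matrix (to exclude zero rows of $A^{-1}$) with the \emph{strict} inequality in assumption (ii), since the non-strict bound $A^{-1} \geq \vzeros[]$ alone would only deliver $E_L^{\rm ZI} \geq \vzeros[]$.
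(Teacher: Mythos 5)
Your proposal is correct and follows essentially the same route as the paper's proof: reduce \eqref{Eq:ReducedFixedPoint} to the linear system $(B_{\rm red}+[b_{\rm shunt}])E_L = B_{\rm red}E_L^* - I_{\rm shunt}$, obtain positivity from the nonnegativity of the inverse of the nonsingular $M$-matrix (no zero rows) paired with the strict inequality in assumption (ii), and conclude stability via Theorem \ref{Thm:Stab} and $D$-stability of $M$-matrices. The only cosmetic difference is that you evaluate the Jacobian by substituting the equilibrium identity directly into \eqref{Eq:ThmJac}, whereas the paper differentiates the factored form $[E_L](B_{\rm red}+[b_{\rm shunt}])(E_L-E_L^{\rm ZI})$; both yield $J_{\rm red}(E_L^{\rm ZI}) = [E_L^{\rm ZI}](B_{\rm red}+[b_{\rm shunt}])$.
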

\smallskip

The first technical condition in Theorem \ref{Lem:ZILoads} restricts the impedance loads from being overly capacitive, while the second restricts the current loads from being overly inductive (since $B_{\rm red}E_L^* < \vzeros[n]$ component-wise). Both would be violated only under fault conditions in the microgrid, and are therefore non-conservative.
%\fdmargin{doesn't this last comment follow by definition of $E_{L}^{*}$ ?}
As expected, in the case of open-circuit operation (no loading) when $I_{\rm shunt} = b_{\rm shunt} = \vzeros[n]$, \eqref{Eq:ExactSolutionZILoad} reduces to $E_L^{\rm ZI} = E_L^*$, the open-circuit load voltage vector.

\smallskip

\begin{pfof}{Theorem \ref{Lem:ZILoads}}\,
Substituting the ZI load model \eqref{Eq:ZILoad} into the reduced power flow equation \eqref{Eq:ReducedFixedPoint}, we obtain
\begin{align}\nonumber
\vzeros[n] &= [E_L]\left((B_{\rm red}+[b_{\rm shunt}])E_L - (B_{\rm red}E_L^*-I_{\rm shunt})\right)\\
\label{Eq:ZIEquation}
&= [E_L](B_{\rm red}+[b_{\rm shunt}])(E_L-E_L^{\rm ZI})\,,
\end{align}
where in the second line we have factored out $(B_{\rm red}+[b_{\rm shunt}])$ and identified the second term in parentheses with \eqref{Eq:ExactSolutionZILoad}. Observe that $E_L = E_L^{\rm ZI}$ is a solution to \eqref{Eq:ZIEquation}. Since $(B_{\rm red}+[b_{\rm shunt}])$ is nonsingular and we require that $E_L \in \real^n_{>0}$, $E_L^{\rm ZI}$ is the \emph{unique} solution of \eqref{Eq:ZIEquation}. It remains only to show that $E_L^{\rm ZI} \in \real^n_{>0}$. Since $-(B_{\rm red} + [b_{\rm shunt}])$ is by assumption an $M$-matrix, the inverse $(B_{\rm red}+b_{\rm [shunt}])^{-1}$ has nonpositive elements \cite{AB-RJP:94}. By invertibility, $(B_{\rm red}+b_{\rm [shunt}])^{-1}$ cannot have any zero rows, and therefore the product $(B_{\rm red}+b_{\rm [shunt}])^{-1}(B_{\rm red}E_L^*-I_{\rm shunt})$ with the strictly negative vector $B_{\rm red}E_L^*-I_{\rm shunt}$ yields a strictly positive vector $E_L^{\rm ZI} \in \real_{>0}^{n}$ as in \eqref{Eq:ExactSolutionZILoad}.

To show local stability, we apply Theorem \ref{Thm:Stab} and check that the Jacobian of \eqref{Eq:ZIEquation} evaluated at \eqref{Eq:ExactSolutionZILoad} is Hurwitz. Differentiating \eqref{Eq:ZIEquation}, we calculate that
$$
J_{\rm red} = [E_L](B_{\rm red}+[b_{\rm shunt}]) + [(B_{\rm red}+[b_{\rm shunt}])(E_L-E_L^{\rm ZI})]\,,
$$
and therefore that
\begin{equation}\label{Eq:JredELZI}
J_{\rm red}(E_L^{\rm ZI}) = [E_L^{\rm ZI}](B_{\rm red}+[b_{\rm shunt}])\,,
\end{equation}
which is Hurwitz since $-(B_{\rm red}+[b_{\rm shunt}])$ is by assumption an $M$-matrix and $M$-matrices are $D$-stable.
\end{pfof}

\smallskip

A more general static load model still is the ZIP model $Q_i(E_i) = b_{\mathrm{shunt},i}E_i^2 + I_{\mathrm{shunt},i}E_i + Q_i$ which augments the ZI model \eqref{Eq:ZILoad} with an additional constant power demand $Q_i \in \real$ \cite{TVC-CV:98}. The model is depicted in Figure \ref{Fig:ZIP}.
\begin{figure}[th]
\begin{center}
\includegraphics[width=0.8\columnwidth]{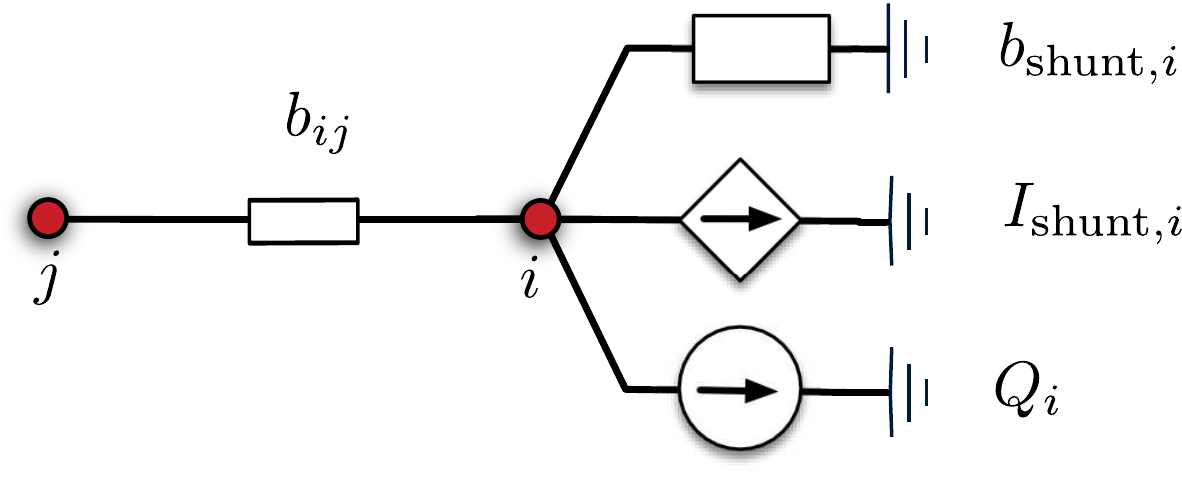}
\caption[]{Pictorial representation of ZIP load model.}
\label{Fig:ZIP}
\end{center}
\end{figure}
In vector notation, the ZIP model generalizes \eqref{Eq:ZILoad} as
\begin{equation}\label{Eq:ZIP}
Q_L^{\rm ZIP}(E_L) = [E_L][b_{\rm shunt}]E_L + [E_L]I_{\rm shunt} + Q_L\,.
\end{equation}
Unlike the ZI model \eqref{Eq:ZILoad}, the reduced power flow equation \eqref{Eq:ReducedFixedPoint} cannot be solved analytically for ZIP loads, except for the special case of a parallel microgrid. Even in this special case, the network can have multiple equilibrium points with non-trivial stability properties \cite{jwsp-fd-fb:13h}. The following result builds on analysis techniques developed in \cite{BG-JWSP-FD-SZ-FB:13zb} and provides an approximate characterization of the high-voltage solution to \eqref{Eq:ReducedFixedPoint} {\tb when the constant power term $Q_L$ is ``small''.}

\begin{theorem}\textbf{(Stability with ``ZIP'' Loads)}\label{Thm:ZIP} Consider the reduced power flow equation \eqref{Eq:ReducedFixedPoint} with the ZIP load model \eqref{Eq:ZIP}, let the conditions of Theorem \ref{Lem:ZILoads} hold, and let $E_L^{\rm ZI}$ be the high-voltage solution of \eqref{Eq:ReducedFixedPoint} for ZI loads as given in Theorem \ref{Lem:ZILoads}. Furthermore, define the \emph{short-circuit capacity matrix}
\begin{equation}\label{Eq:QZI}
Q_{\rm sc} \triangleq [E_L^{\rm ZI}](B_{\rm red}+[b_{\rm shunt}])[E_L^{\rm ZI}] \in \real^{n\times n}\,.
\end{equation}
If $\|Q_L\|$ is sufficiently small, then there exists a unique high-voltage solution $E_L^{\rm ZIP} \in \real^{n}_{>0}$ of the reduced power flow equation \eqref{Eq:ReducedFixedPoint} with ZIP loads \eqref{Eq:ZIP}, given by
\begin{equation}\label{Eq:ELapprox}
E_L^{\rm ZIP} = [E_L^{\rm ZI}]\left(\vones[n] - Q_{\rm sc}^{-1}Q_L + \epsilon\right)\,,
\end{equation}%
where $\|\epsilon\| = \mathcal{O}(\|Q_{\rm sc}^{-1}Q_L\|^2)$, and the corresponding equilibrium point $(E_L^{\rm ZIP},E_I^{\rm ZIP})$ of \eqref{Eq:VoltDroopQuadraticClosed} is locally exponentially stable.
%Moreover, if
%\begin{equation}\label{Eq:SuffEL}
%%\|[E_L^{\rm ZIP}]^{-1}[E_L^{\rm ZI}]-\vones[n]\|_{\infty} < 1 \Leftrightarrow \|Q_{\rm sc}^{-1}Q_L\|_{\infty} < \frac{1}{2}
%\|Q_{\rm sc}^{-1}Q_L\|_{\infty} < 1/2\,,
%\end{equation}
%then $E_L^{\rm ZIP} \in \real^n_{>0}$ and the corresponding equilibrium point $(E_L^{\rm ZIP},E_I^{\rm ZIP})$ of \eqref{Eq:VoltDroopQuadraticClosed} is locally exponentially stable.
\end{theorem}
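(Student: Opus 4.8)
The plan is to recast the ZIP power-flow equation as a small perturbation of the ZI equation already solved in Theorem \ref{Lem:ZILoads}, and then peel off the high-voltage branch by a fixed-point argument. First I would substitute the ZIP model \eqref{Eq:ZIP} into the reduced power flow \eqref{Eq:ReducedFixedPoint} and, exactly as in the proof of Theorem \ref{Lem:ZILoads}, absorb the constant-impedance, constant-current, and network terms into the single factored expression, leaving the constant-power demand $Q_L$ as an inhomogeneity:
\begin{equation*}
\vzeros[n] = [E_L](B_{\rm red}+[b_{\rm shunt}])(E_L-E_L^{\rm ZI}) + Q_L\,.
\end{equation*}
When $Q_L = \vzeros[n]$ this is precisely \eqref{Eq:ZIEquation}, whose high-voltage solution is $E_L^{\rm ZI}$; thus the ZIP problem is a perturbation of the ZI problem by the small vector $Q_L$.

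Next I would pass to the dimensionless coordinate $E_L = [E_L^{\rm ZI}]x$, so that the unperturbed ZI solution corresponds to $x = \vones[n]$. Using that diagonal matrices commute and recognizing the short-circuit capacity matrix $Q_{\rm sc}$ of \eqref{Eq:QZI}, the equation collapses to
\begin{equation*}
[x]\,Q_{\rm sc}\,(x-\vones[n]) = -Q_L\,.
\end{equation*}
Writing $x = \vones[n] + \delta$ and noting that $Q_{\rm sc}$ is invertible (since $-(B_{\rm red}+[b_{\rm shunt}])$ is an $M$-matrix by hypothesis and $[E_L^{\rm ZI}]$ is nonsingular), this is equivalent to the fixed-point equation $\delta = T(\delta) \triangleq -Q_{\rm sc}^{-1}Q_L - Q_{\rm sc}^{-1}[\delta]Q_{\rm sc}\delta$.

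I would then establish existence, local uniqueness, and the claimed asymptotics by showing $T$ is a self-map and a contraction on a closed ball of radius $\mathcal{O}(\|Q_{\rm sc}^{-1}Q_L\|)$ about the origin, which holds once $\|Q_L\|$ is small; its unique fixed point $\delta^\star$ is the high-voltage branch. Because $\delta^\star - (-Q_{\rm sc}^{-1}Q_L) = -Q_{\rm sc}^{-1}[\delta^\star]Q_{\rm sc}\delta^\star$ and $\|\delta^\star\| = \mathcal{O}(\|Q_{\rm sc}^{-1}Q_L\|)$, the residual is quadratically small, yielding $E_L^{\rm ZIP} = [E_L^{\rm ZI}](\vones[n] - Q_{\rm sc}^{-1}Q_L + \epsilon)$ with $\|\epsilon\| = \mathcal{O}(\|Q_{\rm sc}^{-1}Q_L\|^2)$ as in \eqref{Eq:ELapprox}; positivity $E_L^{\rm ZIP} \in \real^{n}_{>0}$ follows since $x$ stays near $\vones[n] > \vzeros[n]$. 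Equivalently one could invoke the implicit function theorem at $(\delta,Q_L) = (\vzeros[n],\vzeros[n])$, whose derivative $\partial\delta/\partial Q_L = -Q_{\rm sc}^{-1}$ reproduces the first-order term.

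Finally, for stability I would apply Theorem \ref{Thm:Stab}. Differentiating the factored ZIP equation gives $J_{\rm red}(E_L) = [E_L](B_{\rm red}+[b_{\rm shunt}]) + [(B_{\rm red}+[b_{\rm shunt}])(E_L-E_L^{\rm ZI})]$; evaluated at $E_L^{\rm ZIP}$ the second term is $\mathcal{O}(\|Q_L\|)$ and the first equals $J_{\rm red}(E_L^{\rm ZI}) + \mathcal{O}(\|Q_L\|)$. Since $J_{\rm red}(E_L^{\rm ZI}) = [E_L^{\rm ZI}](B_{\rm red}+[b_{\rm shunt}])$ is Hurwitz (proof of Theorem \ref{Lem:ZILoads}) and the Hurwitz property is open, $J_{\rm red}(E_L^{\rm ZIP})$ remains Hurwitz for $\|Q_L\|$ small, so Theorem \ref{Thm:Stab} yields local exponential stability. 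I expect the main obstacle to be the fixed-point step: choosing the ball radius so that $T$ is simultaneously a self-map and a contraction, thereby isolating the high-voltage branch (the full equation is genuinely quadratic and admits other, low-voltage solutions, so only local uniqueness can be claimed) and controlling the bookkeeping so the error comes out at the stated order $\mathcal{O}(\|Q_{\rm sc}^{-1}Q_L\|^2)$.
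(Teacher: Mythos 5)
Your proposal is correct and matches the paper's proof in all essentials: the paper likewise factors the ZIP equation as a perturbation of the ZI equation, rescales by $[E_L^{\rm ZI}]$ to reach the quadratic equation $Q_{\rm sc}\epsilon + [q_L-\epsilon]Q_{\rm sc}(q_L-\epsilon) = \vzeros[n]$ (your $\delta$-equation under the shift $\epsilon = \delta + q_L$, with $q_L = Q_{\rm sc}^{-1}Q_L$), and concludes stability by continuity from the ZI case, just as your openness-of-Hurwitz argument does. The only cosmetic difference is that where you run a Banach contraction to obtain existence, local uniqueness, and the quadratic residual in one step, the paper invokes the Implicit Function Theorem at $(\vzeros[n],\vzeros[n])$ (the alternative you yourself mention) and then extracts $\|\epsilon\| = \mathcal{O}(\|Q_{\rm sc}^{-1}Q_L\|^2)$ separately by solving the scalar inequality $\|\epsilon\| \leq \tfrac{\beta}{\alpha}\left(\|q_L\|+\|\epsilon\|\right)^2$.
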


\begin{proof}\,See Appendix A.
\end{proof}

\smallskip

Similar to the technical conditions (i) and (ii) in Theorem \ref{Lem:ZILoads} regarding constant-impedance and constant-current load components, Theorem \ref{Thm:ZIP} requires the constant-power component $Q_L$ of the load model to be sufficiently small. The high-voltage solution \eqref{Eq:ELapprox} can be thought of as a regular perturbation of the solution $E_L^{\rm ZI}$ for ZI loads, where $\epsilon$ is a small error term. As $\|Q_{\rm sc}^{-1}Q_L\| \rightarrow 0$, $\|\epsilon\| \rightarrow 0$ quadratically, and $E_L^{\rm ZIP} \rightarrow E_L^{\rm ZI}$.

\smallskip

\smallskip

While the ZIP load model can accurately capture the steady-state behavior of most aggregated loads, when considering dynamic stability of a power system it is often important to check results obtained for static load models against those obtained using basic dynamic load models \cite{TVC-CV:98}. A common dynamic load model is the \emph{dynamic shunt} model $T_i\dot{b}_{\mathrm{dyn-shunt},i} = Q_i - E_i^2 b_{\mathrm{dyn-shunt},i}$, or in vector notation
\begin{equation}\label{Eq:DynamicShunt}
T\dot{b}_{\mathrm{dyn-shunt}} = Q_L - [E_L]^2 b_{\mathrm{dyn-shunt}}\,.
\end{equation}
where $T$ is a diagonal matrix of time constants. The model specifies a constant-impedance load model $Q_i(E_i) = b_{\mathrm{dyn-shunt},i}E_i^2$, with the shunt susceptance $b_{\mathrm{dyn-shunt},i}$ dynamically adjusted to achieve a constant power consumption $Q_i$ in steady-state. This is a common low-fidelity dynamic model for thermostatically controlled loads, induction motors, and loads behind tap-changing transformers \cite{TVC-CV:98}. We restrict our attention to inductive loads $Q_i \leq 0$, as these are the most common in practice, and without loss of generality we assume that $Q_i < 0$ for all $i \in \mathcal{L}$, since if $Q_i = 0$ the unique steady state of \eqref{Eq:DynamicShunt} is $b_{\mathrm{dyn-shunt},i} = 0$ and the equation can be removed.

\smallskip

\begin{theorem}\textbf{(Stability with Dynamic Shunt Loads)}\label{Thm:StabDS} Consider the reduced power flow equation \eqref{Eq:ReducedFixedPoint} with the dynamic shunt load model \eqref{Eq:DynamicShunt} and $Q_{i} < 0$. If $\|Q_L\|$ is sufficiently small, then there exists a unique solution $(E_L^{\rm DS},b_{\rm dyn-shunt}^{\rm DS}) \in \real^{n}_{>0} \times \real_{<0}^n$ of \eqref{Eq:ReducedFixedPoint},\eqref{Eq:DynamicShunt}, given by
\begin{subequations}
\begin{align}\label{Eq:EDS}
E_{L}^{\rm DS} &= [E_L^*]\left(\vones[n]-Q_{\rm sc}^{-1}Q_L+\epsilon\right)\,,\\
\label{Eq:bDS}
b_{\rm dyn-shunt}^{\rm DS} &= [E_L^{\rm DS}]^{-2}Q_L\,,
\end{align}
\end{subequations}
%Moreover, if
%\begin{equation}\label{Eq:SuffEL2}
%%\|[E_L^{\rm ZIP}]^{-1}[E_L^{\rm ZI}]-\vones[n]\|_{\infty} < 1 \Leftrightarrow \|Q_{\rm sc}^{-1}Q_L\|_{\infty} < \frac{1}{2}
%\|Q_{\rm sc}^{-1}Q_L\|_{\infty} < 1/2
%\end{equation}
%then 
where $Q_{\rm sc} = [E_L^*]B_{\rm red}[E_L^*]$ and $\|\epsilon\| = \mathcal{O}(\|Q_{\rm sc}^{-1}Q_L\|^2)$, and the corresponding equilibrium point $(E_L^{\rm DS},E_I^{\rm DS},b_{\rm dyn-shunt}^{\rm DS})$ of the system \eqref{Eq:VoltDroopQuadraticClosed},\eqref{Eq:DynamicShunt} is locally exponentially stable.

\end{theorem}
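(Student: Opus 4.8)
The plan is to first pin down the equilibrium using the already-established ZIP analysis, and then reduce the stability question---via a careful symmetrization of the augmented Jacobian---to the constant-power case handled by Theorem~\ref{Thm:ZIP}.

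\emph{Existence and uniqueness.} First I would note that at any equilibrium the shunt dynamics \eqref{Eq:DynamicShunt} force $b_{\rm dyn-shunt} = [E_L]^{-2}Q_L$, so the effective static load entering \eqref{Eq:ReducedFixedPoint} is the \emph{constant-power} load $Q_L(E_L) = [E_L]^2 b_{\rm dyn-shunt} = Q_L$. Hence \eqref{Eq:ReducedFixedPoint} collapses to $\vzeros[n] = Q_L + [E_L]B_{\rm red}(E_L-E_L^*)$, which is exactly the ZIP reduced power flow with $b_{\rm shunt}=I_{\rm shunt}=\vzeros[n]$. In this degenerate case $E_L^{\rm ZI}=E_L^*$ and $Q_{\rm sc}=[E_L^*]B_{\rm red}[E_L^*]$, while the hypotheses of Theorem~\ref{Lem:ZILoads} reduce to the structural facts that $-B_{\rm red}$ is an $M$-matrix and $B_{\rm red}E_L^* < \vzeros[n]$ component-wise, both already available. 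Theorem~\ref{Thm:ZIP} then yields, for $\|Q_L\|$ small, the unique high-voltage solution $E_L^{\rm DS}=[E_L^*](\vones[n]-Q_{\rm sc}^{-1}Q_L+\epsilon)$ of \eqref{Eq:EDS}; back-substitution gives $b_{\rm dyn-shunt}^{\rm DS}=[E_L^{\rm DS}]^{-2}Q_L$ as in \eqref{Eq:bDS}, which is strictly negative because $Q_i<0$ and $E_L^{\rm DS}>0$.

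\emph{Stability set-up.} The genuinely new ingredient is the dynamic state $b_{\rm dyn-shunt}$, so \eqref{Eq:VoltDroopQuadraticClosed},\eqref{Eq:DynamicShunt} is a differential-algebraic system with algebraic variable $E_L$ and dynamic variables $(E_I,b_{\rm dyn-shunt})$. Following the template of Theorem~\ref{Thm:Stab}, I would invoke \cite[Theorem~1]{RR:04}: linearize, eliminate the algebraic block, and check the reduced matrix is Hurwitz. The work is to symmetrize the full Jacobian. I would left-multiply the load rows by $[E_L]^{-1}$, the inverter rows by $[E_I]^{-1}$, and the shunt rows by the positive diagonal $S\triangleq-\tfrac12[b_{\rm dyn-shunt}]^{-1}\succ0$; the generalized eigenvalues are invariant under this scaling, exactly as in Theorem~\ref{Thm:Stab}. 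Using the equilibrium identities \eqref{Eq:ZeroI} and $B_{LL}E_L+B_{LI}E_I=-[E_L]b_{\rm dyn-shunt}$ to clear the non-symmetric first-order terms, the scaled Jacobian becomes the symmetric pencil
\begin{equation*}
\widetilde J=\begin{pmatrix} B_{LL}+[b_{\rm dyn-shunt}] & B_{LI} & [E_L] \\ B_{IL} & B_{II}+K_I & \vzeros[] \\ [E_L] & \vzeros[] & \tfrac12[E_L]^2[b_{\rm dyn-shunt}]^{-1}\end{pmatrix},
\end{equation*}
paired with the positive-definite dynamic weight $\mathrm{blkdiag}([E_I]^{-1}\tau_I,ST)$. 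By \cite{RR:04} the equilibrium is locally exponentially stable exactly when the Schur complement of the algebraic block $B_{LL}+[b_{\rm dyn-shunt}]$ in $\widetilde J$ is negative definite, which in turn holds whenever $\widetilde J\prec0$.

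\emph{The key collapse, and the obstacle.} Rather than eliminate $E_L$, I would instead take the Schur complement of $\widetilde J$ with respect to the shunt block $\tfrac12[E_L]^2[b_{\rm dyn-shunt}]^{-1}$, which is negative definite since $b_{\rm dyn-shunt}<0$. A one-line computation shows this Schur complement equals
\begin{equation*}
\begin{pmatrix} B_{LL}-[b_{\rm dyn-shunt}] & B_{LI} \\ B_{IL} & B_{II}+K_I\end{pmatrix},
\end{equation*}
which is precisely the symmetric stability matrix $M$ appearing in Theorem~\ref{Thm:Stab} for the \emph{static} constant-power load $Q_L$ (there $\partial Q_L/\partial E_L=\vzeros[]$ and $[E_L]^{-1}[B_{\rm red}(E_L-E_L^*)]=-[b_{\rm dyn-shunt}]$). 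Since the associated static equilibrium is stable by Theorem~\ref{Thm:ZIP}, this block is negative definite; equivalently one verifies \eqref{Eq:SuffImplicitStability}, namely $B_{\rm red}\prec[b_{\rm dyn-shunt}]$, which holds for small $\|Q_L\|$ because $B_{\rm red}$ is strictly negative definite and $[b_{\rm dyn-shunt}]=\mathcal{O}(\|Q_L\|)$. Negative definiteness of both the shunt block and this Schur complement gives $\widetilde J\prec0$, hence the reduced matrix is Hurwitz. I expect the crux to be the symmetrization itself---guessing the scaling $S=-\tfrac12[b_{\rm dyn-shunt}]^{-1}$ that renders the shunt coupling symmetric, checking the index-1 condition (nonsingularity of $B_{LL}+[b_{\rm dyn-shunt}]$, automatic for small $\|Q_L\|$), and recognizing that eliminating $b_{\rm dyn-shunt}$ recovers \emph{exactly} the constant-power Jacobian. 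This last point is the conceptual payoff: the dynamic shunt neither improves nor degrades small-signal stability relative to the constant-power load.
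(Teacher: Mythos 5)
Your proposal is correct and follows essentially the same route that the paper (only) sketches: the equilibrium is identified by noting the steady-state equivalence of the dynamic shunt to a constant-power load and invoking Theorem~\ref{Thm:ZIP} with $b_{\rm shunt}=I_{\rm shunt}=\vzeros[n]$ (so $E_L^{\rm ZI}=E_L^*$), and stability is obtained by extending the diagonal-scaling symmetrization of Theorem~\ref{Thm:Stab} to the augmented differential-algebraic dynamics via \cite[Theorem 1]{RR:04}. Your explicit scaling $S=-\tfrac{1}{2}[b_{\rm dyn-shunt}]^{-1}$, the resulting symmetric pencil, and the Schur-complement collapse onto the static constant-power matrix $M$ (verified negative definite since $B_{\rm red}\prec[b_{\rm dyn-shunt}]$ for small $\|Q_L\|$) supply precisely the ``tedious but straightforward calculation'' the paper omits, and the computations check out.
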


%\textcolor{red}{
%Note: It might be more elegant to write a load model with an internal state which converges to a ZIP model in steady-state 
%\begin{align*}
%T\dot{z}_L &= -z_L + Q_{L}^{\rm ZIP}(E_L) - g(E_L)\\
%Q_L(V_L) &= z_L + g(E_L)
%\end{align*}
%}

\begin{proof}\, We sketch the proof in Appendix A.
\end{proof}

%\fdmargin{so you did not extend the remark?}

\begin{remark}\textbf{(From Quadratic to Conventional Droop):}\label{Rem:FromQuad}
While the stability results derived above hold for the quadratic droop-controlled microgrid, under certain selections of controller gains a direct implication can be drawn from the above stability results to stability of the microgrid under conventional, linear voltage droop control. Additional information can be found in  \cite[Section V]{jwsp-fd-fb:13h}. \oprocend
\end{remark}

\smallskip

We conclude this section by commenting on the utility of the preceding analysis and stability results. If hard design limits are imposed on the load voltages $E_L$, the expressions \eqref{Eq:ExactSolutionZILoad} or \eqref{Eq:ELapprox} for the unique high-voltage solutions can be used for design purposes to (non-uniquely) back-calculate controller gains, or to determine bounds on tolerable loading profiles.

\section{Controller Performance}
\label{Sec:Performance}

Having thoroughly investigated the stability properties of the closed-loop system \eqref{Eq:VoltDroopQuadraticClosed} in Section \ref{Section: Quadratic Droop Control for Voltage Stabilization}, we now turn to questions of optimality and controller performance. {\tb Here we are interested in inverse-optimality of the resulting equilibrium point of the system in an optimization sense, along with how well the resulting equilibrium achieves the power sharing objective. We do not addresses system-theoretic optimality or performance of the controller.}

\subsection{Optimality of Quadratic Droop Control}
\label{Sec:Optimal}

While in Section \ref{Section: Quadratic Droop Control for Voltage Stabilization} we emphasized the circuit-theoretic interpretation of the quadratic droop controller \eqref{Eq:VoltDroopQuadratic}, it is {\tb also relevant to ask whether the resulting equilibrium point of the system is inverse-optimal with respect to any criteria}. For simplicity of exposition, we restrict our attention in this subsection to constant-impedance load models of the form 
\begin{equation}\label{Eq:Zloads}
Q_i(E_i) = b_{\mathrm{shunt},i}E_i^2\,.
\end{equation}
Analogous extensions to ZIP load models are possible using energy function theory \cite{TJO-ID-CLM:94}, at the cost notational complexity and less insight. To begin, note that for nodal voltages $(E_L,E_I)\in\real^{n+m}_{>0}$, the total reactive power $\mathcal{Q}_{\rm loss}$ absorbed by the inductive transmission lines of the microgrid is given by 
\begin{equation*}\label{Eq:Losses}
{\mathcal{Q}}_{\rm loss}(E_L,E_I) = \sum_{\{i,j\}\in\mathcal{E}} \nolimits B_{ij}(E_i-E_j)^2\,.
\end{equation*}
%
%which is simply the sum of line currents $B_{ij}(E_i-E_j)$ times voltage differences $(E_i-E_j)$. 
Similarly, the total reactive power \emph{consumed} by the loads is
\begin{equation*}\label{Eq:LoadLosses}
{\mathcal{Q}}_{\rm load}(E_L) = -\sum_{i \in \mathcal{L}} \nolimits b_{\mathrm{shunt},i}E_i^2\,.
\end{equation*}
Efficient operation of the microgrid stipulates that we minimize reactive power losses and consumption, since transmission and consumption of reactive power contributes to thermal losses.\footnote{{\tb For an inverse-optimality analysis that relates real power generation to frequency droop control, see \cite{FD-JWSP-FB:13y}}.} Simultaneously however, we have the practical requirement that inverter voltages $E_i$ must remain close to their rated values $E_i^* > 0$. We encode this requirement in the quadratic cost
\begin{equation}\label{Eq:Jvolt}
C_{\rm volt}(E_I) = -\sum_{j \in \mathcal{I}}\nolimits \kappa_i(E_i-E_i^*)^2\,,
\end{equation}
where $\kappa_i < 0$ are cost coefficients. Consider now the combined optimization problem
\begin{align}
	\minimize_{E \in \real^{n + m}_{>0}} 
	& \; \; C = \mathcal{Q}_{\rm loss}(E) + \mathcal{Q}_{\rm load}(E_L) + C_{\rm volt}(E_I)\,,
	\label{Eq:OptCost}
%	\mbox{subject to}
%	& \qquad 0 = Q_{L,i} - \subscr{Q}{e,$i$}(E)\,, 
%	   \quad i \in \mathcal{L} \,.
%	 \label{eq: AC optimal dispatch -- flow generator}
\end{align}%
%\label{eq: AC optimal dispatch}%
%
%
where we {\tb attempt to enforce a trade-off between minimizing reactive power dissipation and minimizing voltage deviations.} The next result shows that an appropriately designed quadratic droop controller \eqref{Eq:VoltDroopQuadratic} is a decentralized algorithm for solving the optimization problem \eqref{Eq:OptCost}. Conversely, any quadratic droop controller solves an optimization problem of the form \eqref{Eq:OptCost} for appropriate coefficients $\kappa_i$.

%\jsmargin{Rephrase theorem statement: ``Rather than A if and only if B, do if blank, then A and B''}

%\begin{proposition}\textbf{(Optimality and Quadratic Droop).}\label{Prop:Optimization} Consider the closed-loop microgrid system \eqref{Eq:VoltDroopQuadraticClosed} {(with constant $Z$-loads)} and the optimization problem \eqref{Eq:OptCost}. Assume as in Theorem \ref{Lem:ZILoads} that $-(B_{\rm red} + [b_{\rm shunt}])$ is an $M$-matrix. The following two statements are equivalent:
%\begin{enumerate}
%\item[(i)] \textbf{Quadratic Droop Stability:} for some controller gains $K_i < 0$ ($i \in \mathcal{I}$), the closed-loop system \eqref{Eq:VoltDroopQuadraticClosed} under quadratic droop control possess a unique locally exponentially stable equilibrium $(E_L,E_I) \in \real^{n+m}_{>0}$\,;
%\item[(ii)] \textbf{Optimization Minimizer:} for some cost coefficients $\kappa_i < 0$ ($i \in \mathcal{I}$), the optimization problem \eqref{Eq:OptCost} has the unique minimizer $(E_L,E_I) \in \real^{n+m}_{>0}$.
%\end{enumerate}
%If these equivalent statements are true, then $\kappa_i = K_i$.
%\end{proposition}

\smallskip

\begin{proposition}\textbf{(Optimality and Quadratic Droop)}\label{Prop:Optimization} Consider the closed-loop microgrid system \eqref{Eq:VoltDroopQuadraticClosed} with constant-impedance loads \eqref{Eq:Zloads} and controller gains $\{K_i\}_{i \in \mathcal{I}}$, and the optimization problem \eqref{Eq:OptCost} with cost coefficients $\{\kappa_i\}_{i \in \mathcal{I}}$. Assume as in Theorem \ref{Lem:ZILoads} that $-(B_{\rm red} + [b_{\rm shunt}])$ is an $M$-matrix. If the parameters are selected such that $K_i = \kappa_i$ for each $i \in \mathcal{I}$, then the unique locally exponentially stable equilibrium $(E_L^{\rm Z},E_I^{\rm Z}) \in \real^{n+m}_{>0}$ of the closed-loop system \eqref{Eq:VoltDroopQuadraticClosed} is equal to the unique minimizer of the optimization problem \eqref{Eq:OptCost}, and both are given by
\begin{align}\label{Eq:ExactSolutionZLoad}
E_L^{\rm Z} &= (B_{\rm red}+[b_{\rm shunt}])^{-1}B_{\rm red}E_L^* \in \real^n_{>0}\,,
%&= E_L^* + L_{\rm red}^{-1}I_{\rm loads}\\
%&\,\,\,\,+ L_{\rm red}^{-1}(I-[b_{\rm loads}]L_{\rm red}^{-1})^{-1}[b_{\rm loads}]E_L^*\\
%&\,\,\,\,+ L_{\rm red}^{-1}(I-[b_{\rm loads}]L_{\rm red}^{-1})^{-1}[b_{\rm loads}]L_{\rm red}^{-1}I_{\rm load}\\
%&= E_L^* + L_{\rm red}^{-1}I_{\rm loads}\\
%&\,\,\,\,+ (L_{\rm red}-[b_{\rm loads}])^{-1}[b_{\rm loads}]E_L^*\\
%&\,\,\,\,+ (L_{\rm red}-[b_{\rm loads}])^{-1}[b_{\rm loads}]L_{\rm red}^{-1}I_{\rm load}\\
%&= (I_{|\mathcal{L}|} + L_{\rm red}^{-1}(I_{|\mathcal{L}|}-[b_{\rm loads}]L_{\rm red}^{-1})^{-1})(E_L^*+L_{\rm red}^{-1}I_{\rm loads})\\
%&= E^{*}_{\rm avg} + L_{\rm red}^{-1}I_{\rm loads}\\
%&\,\,\,\,+ L_{\rm red}^{-1}(I_{|\mathcal{L}|}-[b_{\rm loads}]L_{\rm red}^{-1})^{-1}(E_L^*+L_{\rm red}^{-1}I_{\rm loads})
\end{align}
with $E_I^{\rm Z} \in \real^{m}_{>0}$ recovered by inserting \eqref{Eq:ExactSolutionZLoad} into \eqref{Eq:InverterVoltages}.
%\begin{enumerate}
%\item[1)] \textbf{Quadratic Droop Stability:} the closed-loop system \eqref{Eq:VoltDroopQuadraticClosed} under quadratic droop control possess a unique locally exponentially stable equilibrium $(E_L,E_I) \in \real^{n+m}_{>0}$\,;
%\item[2))] \textbf{Optimization Minimizer:} the optimization problem \eqref{Eq:OptCost} has the unique minimizer $(E_L,E_I) \in \real^{n+m}_{>0}$.
%\end{enumerate}
%If these equivalent statements are true, then $\kappa_i = K_i$.
\end{proposition}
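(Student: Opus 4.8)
The plan is to exploit that, for constant-impedance loads and the gain matching $\kappa_i = K_i$, the cost $C$ in \eqref{Eq:OptCost} is a \emph{quadratic} function of the full voltage vector $E = (E_L,E_I)$, so that its minimizer is pinned down by the single stationarity condition $\nabla C = \vzeros[n+m]$. I would proceed in three steps: (a) show that on the positive orthant this stationarity condition is \emph{identical} to the closed-loop equilibrium equations \eqref{Eq:NonReducedFixedPoint}; (b) show that $C$ is strictly convex, so its critical point is its unique global minimizer; and (c) invoke Theorem \ref{Lem:ZILoads} with $I_{\rm shunt} = \vzeros[n]$ to produce the explicit positive, locally exponentially stable equilibrium \eqref{Eq:ExactSolutionZLoad}.

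For step (a), using that $B$ is the network Laplacian one writes $\mathcal{Q}_{\rm loss}(E) = -E^\transpose B E$, $\mathcal{Q}_{\rm load}(E_L) = -E_L^\transpose [b_{\rm shunt}] E_L$, and (with $\kappa_i = K_i$) $C_{\rm volt}(E_I) = -(E_I-E_I^*)^\transpose K_I (E_I-E_I^*)$. Differentiating, the two blocks of $\nabla C = \vzeros[n+m]$ become
\begin{align*}
B_{LL}E_L + B_{LI}E_I + [b_{\rm shunt}]E_L &= \vzeros[n]\,,\\
B_{IL}E_L + B_{II}E_I + K_I(E_I-E_I^*) &= \vzeros[m]\,.
\end{align*}
Under the model $Q_L(E_L) = [E_L][b_{\rm shunt}]E_L$, left-multiplying the load and inverter blocks of \eqref{Eq:NonReducedFixedPoint} by $[E_L]^{-1}$ and $[E_I]^{-1}$ respectively --- which is legitimate exactly because $E \in \real^{n+m}_{>0}$ --- yields precisely these equations. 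Thus, on the positive orthant, the critical points of $C$ coincide with the equilibria of \eqref{Eq:VoltDroopQuadraticClosed}.

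The crux is step (b): showing that the Hessian $\nabla^2 C = -2M$, with $M = \left(\begin{smallmatrix} B_{LL}+[b_{\rm shunt}] & B_{LI} \\ B_{IL} & B_{II}+K_I \end{smallmatrix}\right)$, is positive definite, i.e. that $M \prec 0$. This is the same Schur-complement computation that underlies the proof of Theorem \ref{Thm:Stab}. Since $B$ is negative semidefinite, its principal block $B_{II}$ is negative semidefinite, and with $K_I \prec 0$ the bottom-right block obeys $-(B_{II}+K_I) \succ 0$; the Schur complement of this block in $-M$ reduces, via the definition \eqref{Eq:ReducedNetworkGeneral} of $B_{\rm red}$, to $-(B_{\rm red}+[b_{\rm shunt}])$. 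This matrix is symmetric (because $B_{IL} = B_{LI}^\transpose$) and is an $M$-matrix by hypothesis, hence positive definite. Therefore $-M \succ 0$, $C$ is strictly convex, and its unique critical point is its unique global minimizer.

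Finally, step (c): the hypotheses of Theorem \ref{Lem:ZILoads} hold with $I_{\rm shunt} = \vzeros[n]$ --- condition (i) is assumed, and condition (ii) reduces to $\vzeros[n] > B_{\rm red}E_L^*$, the componentwise negativity already noted after that theorem --- so \eqref{Eq:VoltDroopQuadraticClosed} admits a unique, locally exponentially stable, strictly positive equilibrium given by \eqref{Eq:ExactSolutionZLoad}. Being strictly positive, this equilibrium is a critical point of the strictly convex $C$ and therefore its unique global minimizer; as it lies in the open set $\real^{n+m}_{>0}$, it also solves the constrained problem \eqref{Eq:OptCost}. I expect the definiteness argument in step (b) to be the main obstacle, the stationarity--equilibrium correspondence and the appeal to Theorem \ref{Lem:ZILoads} being essentially bookkeeping.
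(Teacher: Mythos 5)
Your proposal is correct and follows essentially the same route as the paper's proof: identifying the stationarity condition $\nabla C = \vzeros[n+m]$ with the equilibrium equations via left-multiplication by $[E]^{\pm 1}$, establishing strict convexity through the Schur complement of $-(B_{II}+K_I)$ reducing to $-(B_{\rm red}+[b_{\rm shunt}])$, and invoking Theorem \ref{Lem:ZILoads} with $I_{\rm shunt} = \vzeros[n]$ for the explicit stable equilibrium. The only cosmetic difference is that you match the critical points to the full fixed-point equations \eqref{Eq:NonReducedFixedPoint} directly, while the paper funnels both through the reduced power flow equation \eqref{Eq:RPFETemp}; you also explicitly note that the minimizer lies in the open positive orthant, a point the paper leaves implicit.
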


\begin{pfof}{Proposition \ref{Prop:Optimization}}\,
That $(E_L^{\rm Z},E_I^{\rm Z})$ is the unique locally exponentially stable equilibrium of \eqref{Eq:VoltDroopQuadraticClosed} follows immediately from Theorem \ref{Lem:ZILoads} by setting $I_{\rm shunt} = \vzeros[n]$. We now relate the critical points of the optimization problem \eqref{Eq:OptCost} to the equilibrium equations for the closed-loop system. In vector notation, we have that $\mathcal{Q}_{\rm loss} = -E^{\sf T}BE$, $\mathcal{Q}_{\rm load} = -E_L^{\sf T}[b_{\rm shunt}]E_L$, and $C_{\rm volt} = -(E_I-E_I^*)^{\sf T} K_I(E_I-E_I^*)$, where $E = (E_1,\ldots,E_{n+m})$. The total cost $C$ may therefore be written as the quadratic form $C = x^{\sf T} \mathcal{B} x$, where $x = (E_L,E_I,E_I^*)$ and 
$$
\mathcal{B}  =
-\begin{pmatrix}
B_{LL} + [b_{\rm shunt}] & B_{LI} & \vzeros[] \\ 
B_{IL} & B_{II}+K_I & -K_I\\
\vzeros[] & -K_I & K_I
\end{pmatrix}\,.
$$
The first order optimality conditions $\nabla C(E) = \vzeros[n+m]$ yield
\begin{equation}\label{Eq:FirstOrder}
\begin{pmatrix}
B_{LL}+[b_{\rm shunt}] & B_{LI} \\ B_{IL} & B_{II}+K_I
\end{pmatrix}\begin{pmatrix}E_L \\ E_I\end{pmatrix} = \begin{pmatrix}\vzeros[n] \\ K_IE_I^*\end{pmatrix}\,.
\end{equation}
Solving the second block of equations in \eqref{Eq:FirstOrder} for $E_I$ yields the quadratic droop inverter voltages \eqref{Eq:InverterVoltages}. Substituting this into the first block of equations in \eqref{Eq:FirstOrder}, simplifying, and left-multiplying by $[E_L]$ yields
%\fdmargin{didn't check the details leading to (40); but the identity seems reasonable (expected), and the overall result is somewhat obvious given the cost function}
\begin{equation}\label{Eq:RPFETemp}
\vzeros[n] = [E_L][b_{\rm shunt}]E_L + [E_L]B_{\rm red}\left(E_L-E_L^*\right)\,,
\end{equation}
where $B_{\rm red}$ and $E_L^*$ are as in \eqref{Eq:ReducedNetworkGeneral}--\eqref{Eq:Eaverage}. The result \eqref{Eq:RPFETemp} is exactly the reduced power flow equation \eqref{Eq:ZIEquation} with $I_{\rm shunt} = \vzeros[n]$, which shows the desired correspondance. It remains only to show that this unique critical point is a minimizer. The (negative of the) Hessian matrix of $C$ is given by the matrix of coefficients in \eqref{Eq:FirstOrder}. Since the bottom-right block of this matrix is negative definite, it follows by Schur complements that the Hessian is positive definite if and only if $-(B_{\rm red} + [b_{\rm shunt}])$ is positive definite, which by assumption holds.
\end{pfof}

%\fdmargin{aren't these general properties of circuits rather than your controller? you market it well though. essentially, the optimality of circuits is in $\mathcal{Q}_{\rm loss}(E) + \mathcal{Q}_{\rm load}(E_L)$ and your controller is the gradient of the voltage cost. you could also phrase it like that.}
Note that depending on the heterogeneity of the voltage set points $E_i^*$, minimizing $\mathcal{Q}_{\rm loss} + \mathcal{Q}_{\rm load}$ may or may not conflict with minimizing $C_{\rm volt}$. The quadratic droop controller \eqref{Eq:VoltDroopQuadratic} is a primal algorithm for the optimization problem \eqref{Eq:OptCost}, and can therefore be interpreted as striking an optimal balance between maintaining a uniform voltage profile and minimizing total reactive power dissipation {\tb (cf. \cite{KT-PS-SV-MC:11,HY-DFG-SHL:12}, where similar trade-offs are studied)}. The result can also be interpreted as an application of Maxwell's minimum heat theorem to controller design \cite[Proposition 3.6]{AJvdS:10}.

\smallskip

\begin{remark}\textbf{(Generalized Objective Functions)}
To obtain the generalized feedback controller \eqref{Eq:GenQuad}, one may replace $C_{\rm volt}(E_I)$ given in \eqref{Eq:Jvolt} by
$$
C_{\rm volt}(E_I) = \sum_{i,j\in\mathcal{I}}\nolimits \frac{\alpha_{ij}}{2}E_iE_j + \beta_{ij}E_iE_j^*\,.
$$
For a general strictly convex and differentiable cost $C_{\rm volt}(E_I)$, analogous methods may be used to arrive at feedback controllers of the form $u_i = E_i \cdot \frac{\partial{C_{\rm volt}}}{\partial E_i}$. See \cite[Chapter 9]{JM-JWB-JRB:08}, \cite[Remark 4]{FD-JWSP-FB:13y}\cite{MF-RN-CC-SL:12,MF-CL-SL:13} for more on nonlinear droop curves derived from optimization methods, including dead bands and saturation. \hfill \oprocend
\end{remark}

\subsection{Power Sharing: General Case and Asymptotic Limits}
\label{Sec:PowerShare}

We now study the following question: under an incremental change in load power demands $Q_L \rightarrow Q_L + \Delta q$, how do the reactive power injections at inverters change? This is the question of \emph{power sharing}. 
For simplicity of exposition, we make two standing assumptions in this subsection. First, we consider only the case of constant power loads $Q_L(E_L) = Q_L$. Second, we consider the case of uniform inverter voltage set points, such that $E_I^* = E_{N}\cdot \vones[m]$ for some fixed nominal voltage $E_{N} > 0$. Both assumptions can be easily relaxed to yield more general (but much more cumbersome) formulae. Under these assumptions, the results of Theorem \ref{Thm:ZIP} simplify to yield the approximate load voltages
\begin{equation}\label{Eq:ConstPExpansion}
E_L \simeq E_N\left(\vones[n]-\frac{1}{E_N^2}B_{\rm red}^{-1}Q_L\right)\,,
\end{equation}
with $B_{\rm red}$ as defined in Theorem \ref{Thm:ReducedFixedPoint}. The following definition is useful for interpreting our result to follow.

\smallskip

\begin{definition}\textbf{(Differential Effective Reactance)}\label{Def:DiffReactance}
%Consider an electrical network defined by a graph $G(\mathcal{L}\cup\mathcal{I},\mathcal{E})$ with susceptance matrix \eqref{Eq:BPartition}, and 
Let $E_i^{\rm oc}$ be the open-circuit voltage at load $i \in \mathcal{L}$. The \emph{differential effective reactance} $X^{\rm eff}_{ij}$ between loads $i,j \in \mathcal{L}$ is the voltage difference $E_i - E_i^{\rm oc}$ measured at load $i$ when a unit current is extracted at load $j$ with all other current injections zero.
\end{definition}

\smallskip

In other words, $X_{ij}^{\rm eff}$ is the proportionality coefficient from current injections at load $j$ to voltage deviations at load $i$. For branches $\{i,j\} \in \mathcal{E}$, we also let $X_{ij}$ denote the direct reactance between buses $i$ and $j$, and set $X_{ij} = +\infty$ if there is no branch. The main result of this section characterizes how inverter reactive power injections behave under the quadratic droop controller \eqref{Eq:VoltDroopQuadratic} as a function of the susceptance matrix $B$, load demands $Q_L$, and controller gains $K_I$.

\smallskip

\begin{theorem}\textbf{(Power Sharing)}\label{Thm:PowerShare}
Consider the closed-loop system \eqref{Eq:VoltDroopQuadraticClosed} with the approximate load voltages \eqref{Eq:ConstPExpansion}. Then the inverter reactive power injections $Q_I \in \real^{m}$ are given by 
%, and definite the inverter/load sensitivity matrix $S_{IL} \in \real^{m \times n}_{<0}$ by
\begin{equation}\label{Eq:SIL}
Q_I = K_I(B_{II}+K_I)^{-1}B_{IL}B_{\rm red}^{-1}Q_L\,.
\end{equation}
Moreover, the following special cases hold:
\begin{enumerate}
\item[1)] \textbf{Distance-Based Power Sharing: } In the high-gain limit where $K_i \rightarrow -\infty$ for each inverter $i \in \mathcal{I}$, it holds that
%
%
%\begin{equation}\label{Eq:QIQLShare1}
%Q_I = B_{IL}B_{LL}^{-1}Q_L\,.
%\end{equation}
%
\begin{equation}\label{Eq:QIQLShare1}
Q_i = - \sum_{k \in \mc L} \frac{1}{X_{ik}}\sum_{j \in \mc L}X^{\rm eff}_{kj}Q_j\,,\quad i \in \mathcal{I}\,.
\end{equation}
That is, loads are preferentially supplied with power by sources which are electrically nearby;
\item[2)] \textbf{Proportional Power Sharing:} In the low-gain limit where $K_i \rightarrow 0^-$ for each $i \in \mathcal{I}$, it holds that
%
%\begin{equation}\label{Eq:QIQLShare2}
%Q_I = -\frac{K_I\vones[m\times n]}{\sum_{i \in \mathcal{I}}K_i} \,Q_L\,.
%\end{equation}
\begin{equation}\label{Eq:QIQLShare2}
Q_i = \frac{K_i}{\sum_{i \in \mathcal{I}}K_i} Q_{\rm total}\,,\quad i \in \mathcal{I}\,,
\end{equation}
where $Q_{\rm total} = \sum_{i \in \mc L}Q_i$ is the total load. That is, sources supply power based only on their relative controller gains.
\end{enumerate}
\end{theorem}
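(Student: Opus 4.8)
The plan is to first derive the exact power-sharing formula \eqref{Eq:SIL}, and then obtain the two asymptotic limits by letting the gains $K_i$ tend to $-\infty$ and to $0^-$ in that formula. The starting point is that at any equilibrium the inverter droop law \eqref{Eq:VoltDroopQuadratic} forces $Q_{\mathrm{e},I} = [E_I]K_I(E_I-E_I^*)$, so it suffices to evaluate $E_I-E_I^*$. I would substitute the inverter-voltage recovery \eqref{Eq:InverterVoltages} together with the approximate load voltages \eqref{Eq:ConstPExpansion} and the uniform set point $E_I^*=E_N\vones[m]$. The crucial simplification comes from $B$ being a susceptance Laplacian with $B\vones[n+m]=\vzeros[n+m]$; in block form this gives $B_{IL}\vones[n]=-B_{II}\vones[m]$ and $B_{LL}\vones[n]+B_{LI}\vones[m]=\vzeros[n]$. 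Using these identities the nominal ($E_N$-proportional) contributions cancel, leaving $E_I-E_I^*=\tfrac{1}{E_N}(B_{II}+K_I)^{-1}B_{IL}B_{\rm red}^{-1}Q_L$. To leading order in $Q_L$ one has $[E_I]\approx E_N I_m$, and multiplying by $E_NK_I$ yields \eqref{Eq:SIL}.

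For the distance-based limit I would send $K_i\to-\infty$ in \eqref{Eq:SIL}. Two factors simplify: $K_I(B_{II}+K_I)^{-1}\to I_m$ and $B_{\rm red}=B_{LL}-B_{LI}(B_{II}+K_I)^{-1}B_{IL}\to B_{LL}$, so $Q_I\to B_{IL}B_{LL}^{-1}Q_L$. It then remains to re-express this product circuit-theoretically. In this stiff-inverter regime the inverters act as fixed-voltage (grounded) terminals, so $B_{LL}$ is exactly the grounded susceptance Laplacian of the loads; its inverse realizes the differential effective reactance of Definition \ref{Def:DiffReactance} (that is, $(B_{LL}^{-1})_{kj}$ corresponds to $X^{\rm eff}_{kj}$), while each entry $(B_{IL})_{ik}$ is the direct inverter-to-load susceptance $1/X_{ik}$. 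Writing out $(B_{IL}B_{LL}^{-1}Q_L)_i$ as the double sum over $k\in\mathcal L$ and $j\in\mathcal L$ then reproduces \eqref{Eq:QIQLShare1}. The care here is purely in the circuit bookkeeping, matching matrix entries to reactances with the correct signs and verifying that the relevant effective-reactance network is the one with the inverters grounded.

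The proportional-sharing limit $K_i\to0^-$ is the delicate one, and I expect it to be the main obstacle. One cannot simply set $K_I=\vzeros[]$, because $B_{\rm red}$ itself degenerates: as $K_I\to\vzeros[]$ it tends to the Kron reduction $B_{\rm Kron}=B_{LL}-B_{LI}B_{II}^{-1}B_{IL}$ of the \emph{singular} Laplacian $B$, which is again a singular Laplacian with null vector $\vones[n]$. Hence $B_{\rm red}^{-1}$ blows up along $\vones[n]$ precisely as the prefactor $K_I(B_{II}+K_I)^{-1}\approx K_IB_{II}^{-1}$ vanishes, and the limit is a finite $0\cdot\infty$ cancellation. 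I would handle this via the first-order expansion $B_{\rm red}=B_{\rm Kron}+\tilde C+\mathcal O(\|K_I\|^2)$ with $\tilde C=B_{LI}B_{II}^{-1}K_IB_{II}^{-1}B_{IL}$, and the standard fact that the inverse of a symmetric matrix with a simple vanishing eigenvalue is dominated by its null-space projection, $B_{\rm red}^{-1}\approx(\vones[n]^{\sf T}\tilde C\vones[n])^{-1}\vones[n]\vones[n]^{\sf T}$. The key computation is $\vones[n]^{\sf T}\tilde C\vones[n]=\sum_{i\in\mathcal I}K_i$, which follows from $B_{II}^{-1}B_{IL}\vones[n]=-\vones[m]$. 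Consequently $B_{\rm red}^{-1}Q_L\approx(Q_{\rm total}/\!\sum_{i}K_i)\vones[n]$, and feeding this through $B_{IL}\vones[n]=-B_{II}\vones[m]$ and $K_I(B_{II}+K_I)^{-1}\to K_IB_{II}^{-1}$ collapses the topology-dependent factors, leaving $Q_i$ proportional to $K_i/\!\sum_i K_i$ times $Q_{\rm total}$, i.e.\ \eqref{Eq:QIQLShare2}. Throughout I would keep careful track of the global sign convention relating $Q_{\rm total}$ to the injection $Q_{\mathrm{e},I}$, since that is where an overall sign is fixed.
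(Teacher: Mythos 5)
Your proposal is correct and follows essentially the same route as the paper's proof: derive \eqref{Eq:SIL} by substituting \eqref{Eq:ConstPExpansion} into \eqref{Eq:InverterVoltages} and linearizing the injection $Q_I = K_I[E_I](E_I-E_N\vones[m])$ about the open-circuit point, obtain the high-gain limit from $K_I(B_{II}+K_I)^{-1}\to I_m$, $B_{\rm red}\to B_{LL}$ together with Lemma \ref{Lem:Effective} (noting $X^{\rm eff}=-B_{LL}^{-1}$), and resolve the $0\cdot\infty$ cancellation in the low-gain limit via the rank-one singular term of $B_{\rm red}^{-1}$ with $\vones[n]^{\sf T}\tilde{C}\vones[n]=\sum_{i\in\mathcal{I}}K_i$. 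Your explicit expansion $B_{\rm red}=B_{\rm Kron}+\tilde{C}+\mathcal{O}(\|K_I\|^2)$ with the null-space projection of the inverse is precisely the unpacked form of the Laurent-series argument the paper invokes by citation, so the two proofs coincide in substance.
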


\medskip

In the high gain limit \eqref{Eq:QIQLShare1}, each inverter behaves as a stiff (constant) voltage source, and power is routed based on electrical distances. In particular, \eqref{Eq:QIQLShare1} adds up the parallel transfer paths from all loads $j$ through all intermediate loads $k$ adjacent to inverter $i$. In the low-gain limit of \eqref{Eq:QIQLShare2}, inverters react to changes in \emph{total} load by supplying reactive power in proportion to their controller gain $K_i$, which can be selected as being proportional to the generation capacity of the unit. Moreover, observe that \eqref{Eq:QIQLShare2} does not depend on the network topology. The general formula \eqref{Eq:SIL} can be interpreted as interpolating between these two regimes.

%\fdmargin{maybe also discuss how this relates to the trade-off in the cost (38). In both setups $K_{i}$ gives the trade-off between a flat voltage profile and minimizing reactive flows}

\begin{pfof}{Theorem \ref{Thm:PowerShare}}
\,From the closed-loop system \eqref{Eq:VoltDroopQuadraticClosed} we observe that in steady-state, the inverter reactive power injections are given by
\begin{equation}\label{Eq:InvInjections}
Q_I = K_I[E_I](E_I-E_N\vones[m])\,,
\end{equation}
in which the only variables are the inverter voltages $E_I$. However, Theorem \ref{Thm:ReducedFixedPoint} yielded an expression for $E_I$ in terms of $E_L$, namely \eqref{Eq:InverterVoltages}. 
Substituting the load voltages \eqref{Eq:ConstPExpansion} into \eqref{Eq:InverterVoltages} and working through some algebra, one finds that $E_I = E_N \cdot W_3\vones[m] + A_1Q_L$, where $W_3$ is row-stochastic, and hence
\begin{equation}\label{Eq:InvVoltagesConstantP}
E_I = E_N\vones[m] + A_1Q_L\,,
\end{equation}
where
\begin{align}\label{Eq:A1QShare}
%W_3 &\triangleq (B_{II}+K_I)^{-1}\left(I_m + B_{IL}B_{\rm red}^{-1}B_{LI}(B_{II}+K_I)^{-1}\right)K_I\\
%A_1 &\triangleq (B_{II}+K_I)^{-1}B_{IL}B_{\rm red}^{-1}[E_L^*]^{-1}\,.
A_1 &\triangleq \frac{1}{E_N}(B_{II}+K_I)^{-1}B_{IL}B_{\rm red}^{-1}\,.
\end{align}
%One may somewhat tediously verify that $W_3$ is row-stochastic. Thus, $E_I$ is the sum of a constant term which averages all inverter set points, and a load-dependent term $A_1Q_L$. 
%Much like in the proof of Theorem \ref{Thm:ZIP}, we linearize \eqref{Eq:InvInjections} around the open-circuit operating condition $(E_I = E_N\vones[m], Q_I = Q_I^{\rm oc})$ where $Q_{I}^{\rm oc} = [K_I][W_3E_I^*](W_3E_I^*-E_I^*)$. Performing this linearization, we obtain
%$$
%Q_I = K_I[W_3E_I^*](E_I-E_I^*)\,,
%$$
%which after substituting \eqref{Eq:InvVoltagesConstantP} becomes
%\begin{equation}\label{Eq:QILinearized1}
%Q_I = Q_I^{\rm oc} + \underbrace{K_I[W_3E_I^*]A_1}_{S_{IL}}Q_L\,.
%\end{equation}
%
Much like in the proof of Theorem \ref{Thm:ZIP}, we linearize \eqref{Eq:InvInjections} around the open-circuit operating condition $(E_I = E_N\vones[m], Q_I = \vzeros[m])$. Performing this linearization, we obtain
$$
Q_I = E_N K_I(E_I-E_N\vones[m])\,,
$$
which after substituting \eqref{Eq:InvVoltagesConstantP} and \eqref{Eq:A1QShare} yields the desired result \eqref{Eq:SIL}. To show the first statement, for $\varepsilon > 0$ define
\begin{subequations}\label{Eq:SILe}
\begin{align}
B_{\mathrm{red},\varepsilon} = B_{LL} - B_{LI}(B_{II}+\varepsilon K_I)^{-1}B_{IL}\\
S_{\varepsilon} = \varepsilon K_I(B_{II}+\varepsilon K_I)^{-1}B_{IL}B_{\mathrm{red},\varepsilon}^{-1}\,.
\end{align}
\end{subequations}
Note that for $\varepsilon = 1$, we recover the previously derived formula \eqref{Eq:SIL} as $Q_I = S_{1}Q_L$. We calculate that $\lim_{\varepsilon\rightarrow \infty} B_{\mathrm{red},\varepsilon} = B_{LL}$, and that $\lim_{\varepsilon \rightarrow \infty} \varepsilon K_I(B_{II}+\varepsilon K_I)^{-1} = I_m$. It follows that
%\fdmargin{I understand these limits if we had scalar quantities. are you sure they also hold for matrix formulas?}
$$
\lim_{\varepsilon\rightarrow \infty}S_{\varepsilon} = B_{IL}B_{LL}^{-1}\,.
$$
Applying Lemma \ref{Lem:Effective} then leads to the first statement. The proof of the second statement is more delicate, since for $\varepsilon = 0$, $B_{\mathrm{red},0} = B_{LL} - B_{LI}B_{II}^{-1}B_{LI}$ is singular with one-dimensional kernel spanned by $\vones[m]$ \cite[Lemma II.1]{fd-fb:11d}. Nonetheless, $B_{\mathrm{red},\varepsilon}^{-1}$ may be expanded in a Laurent series around the simple pole at $\varepsilon = 0$ \cite{PS-GWS:93}. Moreover, since $\lim_{\varepsilon \rightarrow 0} \varepsilon K_I(B_{II}+\varepsilon K_I)^{-1} = \varepsilon K_I B_{II}^{-1}$, it is clear that the only term in the Laurent series of $B_{\mathrm{red},\varepsilon}^{-1}$ giving rise to a non-zero %finite 
contribution to $\lim_{\varepsilon\rightarrow 0}S_{\varepsilon}$ is the term proportional to $\varepsilon^{-1}$.
%\fdmargin{your level of detail becomes sparser and sparser. i guess nobody will care towards the end}
Some straightforward computations using \cite[Theorem 1.1]{PS-GWS:93} give that the required term in the Laurent series is $\vones[n\times n]/(\varepsilon\vones[m]^{\sf T} K_I\vones[m])$, and hence that
\begin{align*}
\lim_{\varepsilon\rightarrow 0}S_{\varepsilon} &= \lim_{\varepsilon \rightarrow 0} \varepsilon K_IB_{II}^{-1}B_{IL}\cdot \frac{1}{\varepsilon}\frac{1}{\vones[m]^{\sf T}K_I\vones[m]}\vones[n \times n]\,,\\
&= \frac{1}{\sum_{i \in \mathcal{I}}K_i}K_I B_{II}^{-1}B_{IL}\vones[m\times n]\,.
\end{align*}
Since $-B_{II}^{-1}B_{IL}$ is row-stochastic \cite[Lemma II.1]{fd-fb:11d}, $B_{II}^{-1}B_{IL}\vones[n\times n] = -\vones[m\times n]$, and the result follows by writing things out in components using Lemma \ref{Lem:Effective}. 
%The final statement follows by taking the difference between the matrices in \eqref{Eq:SIL} and \eqref{Eq:QIQLShare2}.
%
% becomes
%\begin{align}\nonumber
%Q_I &= E_N K_I A_1Q_L\\
%\label{Eq:QILinearized1}
%&= \underbrace{K_I(B_{II}+K_I)^{-1}B_{IL}B_{\rm red}^{-1}}_{S_{IL}}Q_L\,.
%\end{align}
\end{pfof}

{\tb As a final technical remark regarding Theorem \ref{Thm:PowerShare}, we emphasize that the proportional power sharing result \eqref{Eq:QIQLShare2} represents a \emph{limit} under the linear approximation \eqref{Eq:ConstPExpansion}. In this low-gain limit, $B_{\rm red}^{-1}$ in \eqref{Eq:ConstPExpansion} becomes very large and for any fixed $Q_L$ the second term in the linearization \eqref{Eq:ELapprox} dominates, leading to large voltage deviations. This violates the premise under which the linearization was derived, and may lead to the loss of the closed-loop system's equilibrium point or to violation of the decoupling assumption. Thus, the extent to which the proportional power sharing result may be implemented in practice is limited by the size of the loads to be serviced, the stiffness of the grid, and the stability bottleneck associated with low load voltages. We explore this issue further via simulations in Section \ref{Sec:Sim}.
}

Nonetheless, an interesting observation can be made by comparing Theorem \ref{Thm:PowerShare} with the optimization problem \eqref{Eq:Jvolt}. Namely, large controller gains lead to accurate voltage regulation and distance-based reactive power sharing, while small controller gains minimize reactive power dissipation and yield proportional power sharing (but with risk of instability).

{\tb

\section{Simulations}
\label{Sec:Sim}

Through simulations we now demonstrate the robustness of our results to unmodeled resistive losses, coupled active/reactive power flows, and frequency dynamics. We consider the islanded microgrid of Figure \ref{Fig:ParallelandGeneral}(b) consisting of five loads and three inverters. Loads are modeled using the dynamic shunt model \eqref{Eq:DynamicShunt} for reactive power demands, along with the analogous dynamic conductance model for active power demands. The consumption set points in these load models are taken as disturbance inputs in what follows, in order to subject the system to time-varying loads. We model additional high-frequency variation on the load consumption set points as uncorrelated white noise, with a standard deviation of 15\% of the nominal set points for both active and reactive power. Branches have a mixed resistive/inductive character with non-uniform $R/X$ ratios of between 0.3 and 1.1. Inverter frequencies are controlled by the frequency droop controller
$$
\omega_i = \omega^* - m_iP_{\mathrm{e},i}\,,\qquad i \in \mathcal{I}\,,
$$
where $\omega_i$ is the inverter output frequency, $\omega^* = 2\pi\cdot 60$Hz, and $m_i > 0$ is the frequency droop coefficient.
System parameters are based on those from the experiments in \cite{jwsp-qs-fd-jmv-jmg-fb:13s}, and are omitted for brevity. The quadratic droop gains are set based on the conventional droop gains from \cite{jwsp-qs-fd-jmv-jmg-fb:13s} as $K_i = -\frac{1}{n_i E_i^*}$\@, which is a relatively stiff tuning leading to strong voltage regulation. In particular, $K_1 = K_2 = 2K_3$, meaning that inverters 1 and 2 are rated equally, with inverter 3 being rated for half as much power.

The system is under nominal loading up to $t=2$s, at which point loads $\{1,3,5\}$ are subject to sinusoidal variations with a 50\% magnitude and a 0.5s period for both active and reactive power (at constant power factor). At $t = 4$s sinusoidal variation stops and active and reactive consumption set points at load buses $\{2,4\}$ are doubled, before returning to nominal loading at $t = 6$s. Figure \ref{Fig:Sim1a} shows the resulting inverter reactive power injections.\footnote{The corresponding traces under the conventional voltage droop controller are extremely similar, and are therefore omitted for clarity.} With this tuning, the controller is able to maintain stability and track the load demands, but proportional power sharing is poor in accordance with Theorem \ref{Thm:PowerShare} (the curves for inverters 1 and 2 should overlap, but do not). Figure \ref{Fig:Sim1b} shows the resulting voltage magnitudes of both inverter and load buses. The inverter voltages stay well regulated, which prevents load buses voltages from falling even lower.

\begin{figure}[th!]
\begin{center}
\includegraphics[width=0.95\columnwidth]{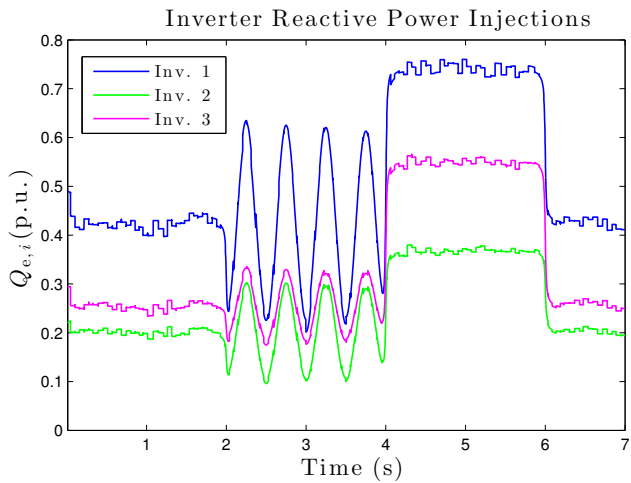}
\caption[]{Evolution of inverter reactive power injections under quadratic droop control for large feedback gains $K_i$. Vertical axis normalized by base power of 1400 VA.}
\label{Fig:Sim1a}
\end{center}
\end{figure}

\begin{figure}[th!]
\begin{center}
\includegraphics[width=0.95\columnwidth]{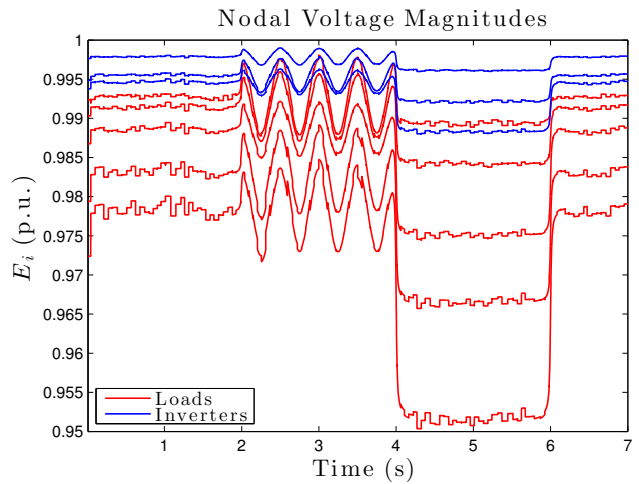}
\caption[]{Evolution of voltage magnitudes under quadratic droop control for large feedback gains $K_i$. Vertical axis normalized by base voltage of $230$V. }
\label{Fig:Sim1b}
\end{center}
\end{figure}

We now test the prediction from Theorem \ref{Thm:PowerShare} that making $K_i$ small will enforce proportional power sharing, but also make the network more prone to instability. We reduce the feedback gains $K_i$ substantially, to 5\% of their previous values, and repeat the above experiment. The resulting traces are shown in Figures \ref{Fig:Sim2a} and \ref{Fig:Sim2b}. As expected, early in the experiment reactive power sharing is enforced, with inverters 1 and 2 sharing power equally and providing twice the power of inverter 3. As seen in Figure \ref{Fig:Sim2b}, voltage magnitudes are significantly lower than they were under higher controller gains. When the load is doubled at $t = 4$s a voltage collapse occurs and the system is unable to recover. This confirms  that our theoretical results remain useful in a more general model setting.

\begin{figure}[th!]
\begin{center}
\includegraphics[width=0.95\columnwidth]{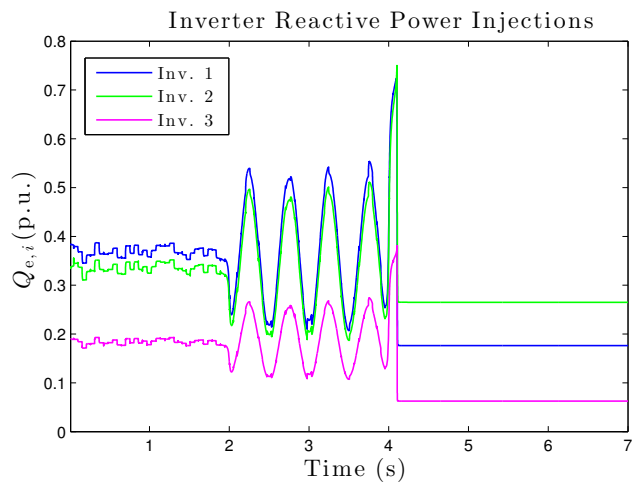}
\caption[]{Evolution of inverter reactive power injections under quadratic droop control for small feedback gains $K_i$. Vertical axis normalized by base power of 1400 VA.}
\label{Fig:Sim2a}
\end{center}
\end{figure}

\begin{figure}[th!]
\begin{center}
\includegraphics[width=0.95\columnwidth]{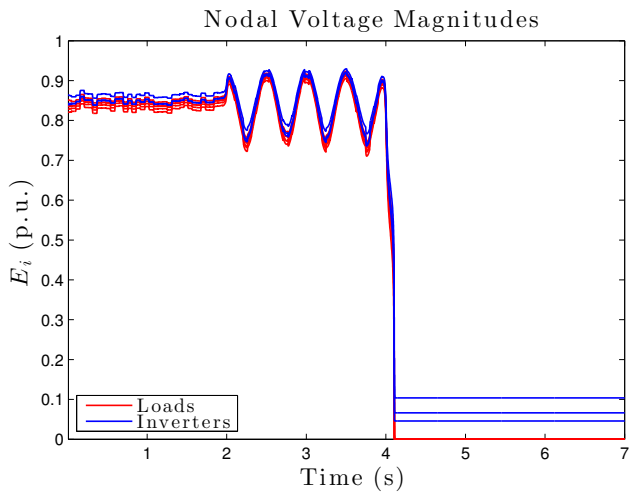}
\caption[]{Evolution of voltage magnitudes under quadratic droop control for small feedback gains $K_i$. Vertical axis normalized by base voltage of $230$V. }
\label{Fig:Sim2b}
\end{center}
\end{figure}

}

%{\tb
%Despite this technical limitation, Theorem \ref{Thm:PowerShare} yields an immediate corollary which allows us to evaluate the proportional power sharing performance of the controller \emph{a priori}, as a function of the network parameters and controller gains. In particular, consider subtracting the matrices in \eqref{Eq:SIL} and \eqref{Eq:QIQLShare2} from one another and taking the $\infty$-norm
%
%$$
%\left\|K_I(B_{II}+K_I)^{-1}B_{IL}B_{\rm red}^{-1} + \frac{K_I\vones[m\times n]}{\sum_{i \in \mathcal{I}}K_i}\right\|_{\infty}.
%$$
%The resulting scalar quantifies the ``worst'' inverter from the perspective of proportional power sharing, and tends to zero as parameters are tuned to achieve proportional power sharing.
%}

\smallskip

\smallskip

\section{Conclusions}
\label{Section: Conclusions}

In this work we have provided a detailed analysis of the quadratic droop controller, a tractable modification of conventional droop control for application in islanded microgrids. The special structure of quadratic droop control allows for rigorous circuit-theoretic analysis methods to be applied to the closed-loop system. A detailed analysis of system {\tb equilibria and} stability was provided for several load models, along with an optimization interpretation of the resulting decentralized controller. The power sharing characteristics of the controller were analyzed, showing that the controller interpolates between proportional power sharing (in the low-gain regime) and power sharing based on electrical distance (in the high-gain regime). Our analysis results provide easily verifiable certificates for system stability, and the proof techniques developed should prove quite useful for related microgrid {\tb analysis and} control problems.

The approach and techniques reported in this paper should serve as a guidepost for further work on the following key technical problems. While our results apply to microgrids with uniform $R/X$ ratios under a decoupling assumption, the coordinate transformation used herein to treat uniform $R/X$ ratio microgrids (see Remark \ref{Rem:Decoupling}) has the side-effect of mixing active and reactive power, making true active/reactive power sharing {\tb more subtle} to implement. Relaxing these assumptions may prove possible using recently developed analysis tools such as  {\tb coupled} %generalized Laplacian ratios \cite{ET-BB-DFG:15} and 
power flow linearizations. The design of a simple, decentralized, and provably stable droop-like controller for voltage stabilization and power sharing in microgrids with non-uniform $R/X$ ratios remains an open problem. {\tb Additional important practical directions are an analysis of interactions between inner voltage/current control loops and droop control, along with a voltage stability analysis for mixed inverter/generator networks.}

While here we have addressed the problem of primary control, the authors view secondary control for islanded microgrids \cite{jwsp-qs-fd-jmv-jmg-fb:13s} as an area requiring new problem formulations and increased theoretical attention. In particular, it is not clear to what extent standard secondary control formulations such as voltage regulation and reactive power sharing (adopted from high-voltage systems and power supply backup systems, respectively) are important in sub-distribution sized microgrids. %A natural theoretical approach worthy of future work is to fuse ideas from distributed optimization with circuit-theoretic analysis and control strategies such as those presented herein.

% Note that keywords are not normally used for peerreview papers.

% For peer review papers, you can put extra information on the cover
% page as needed:
% \ifCLASSOPTIONpeerreview
% \begin{center} \bfseries EDICS Category: 3-BBND \end{center}
% \fi
%
% For peerreview papers, this IEEEtran command inserts a page break and
% creates the second title. It will be ignored for other modes.
\IEEEpeerreviewmaketitle

\smallskip
\medskip
\section*{Acknowledgment}

The authors would like to thank Marco Todescato, Basilio Gentile and Sandro Zampieri for fruitful conversations.

% Can use something like this to put references on a page
% by themselves when using endfloat and the captionsoff option.
\ifCLASSOPTIONcaptionsoff
  \newpage
\fi

% trigger a \newpage just before the given reference
% number - used to balance the columns on the last page
% adjust value as needed - may need to be readjusted if
% the document is modified later
%\IEEEtriggeratref{8}
% The "triggered" command can be changed if desired:
%\IEEEtriggercmd{\enlargethispage{-5in}}

\bibliographystyle{IEEEtran}
\bibliography{alias,Main,FB,New}

\appendices

\section{Supporting Lemmas and Proofs}

\begin{lemma}\textbf{(Properties of Susceptance Matrix)}\label{Lem:Properties}
The susceptance matrix $B \in \real^{(n+m)\times(n+m)}$ satisfies
\begin{enumerate}[(i)]
\item $B_{ij} = B_{ji} \geq 0$ for $i \neq j$, with strict inequality if and only if $\{i,j\} \in \mathcal{E}$\,;
\item $B$ is negative semidefinite with a simple eigenvalue at zero corresponding to the eigenvector $\vones[n+m]$\,;
\item all principal submatrices of $B$ are negative definite.
\end{enumerate}
\end{lemma}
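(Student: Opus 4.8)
The plan is to identify $-B$ with the Laplacian matrix of the connected weighted graph $G$ and then lean on standard spectral facts about graph Laplacians, filling the one real gap (the principal-submatrix claim) with a short quadratic-form argument. First I would unpack the definition of the susceptance matrix: neglecting transfer conductances, the off-diagonal entries are $B_{ij} = -b_{ij}$ for $\{i,j\}\in\mathcal{E}$ and zero otherwise, while $B_{ii} = \sum_{j\neq i}b_{ij}$. Because the lines are inductive, every branch susceptance satisfies $b_{ij}<0$, so $B_{ij}=-b_{ij}>0$ exactly when $\{i,j\}\in\mathcal{E}$; together with the symmetry $B_{ij}=B_{ji}$ inherited from $Y=Y^{\sf T}$, this is precisely statement (i). Writing $w_{ij}\triangleq B_{ij}=-b_{ij}\geq 0$ for the resulting nonnegative edge weights, one checks $B_{ii}=-\sum_{j\neq i}w_{ij}$, so $-B$ has exactly the zero row sums and off-diagonal sign pattern of the weighted Laplacian $L$ of $G$.

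For (ii) I would invoke the Laplacian quadratic-form identity $x^{\sf T}Lx=\sum_{\{i,j\}\in\mathcal{E}}w_{ij}(x_i-x_j)^2\geq 0$, which shows $L=-B$ is positive semidefinite, hence $B$ negative semidefinite. The zero row sums give $B\vones[n+m]=\vzeros[n+m]$, so $\vones[n+m]$ is an eigenvector at the eigenvalue zero; simplicity of that eigenvalue then follows from connectivity of $G$, since $x^{\sf T}Lx=0$ forces $x_i=x_j$ along every branch and hence $x\in\mathrm{span}(\vones[n+m])$.

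For (iii), a principal submatrix of $B$ on a proper index set $S\subsetneq\mathcal{V}$ is $-L_S$, with $L_S$ the corresponding principal submatrix of $L$. The key step is a \emph{zero-extension}: given $y\in\real^{|S|}$, pad it with zeros on $\mathcal{V}\setminus S$ to get $x\in\real^{n+m}$, so that $y^{\sf T}L_Sy=x^{\sf T}Lx=\sum_{\{i,j\}\in\mathcal{E}}w_{ij}(x_i-x_j)^2\geq 0$. If this vanishes, connectivity forces $x$ to be globally constant, but $x$ is zero on the nonempty complement $\mathcal{V}\setminus S$, so $x=\vzeros[n+m]$ and thus $y=\vzeros[|S|]$. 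Hence $L_S$ is positive definite and $-L_S$ negative definite. (Here (iii) is understood for submatrices omitting at least one index, since $B$ itself is only semidefinite.)

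The only genuinely delicate point is (iii): the zero-extension argument must use connectivity of the whole graph $G$, not of the subgraph induced on $S$ — which need not be connected — because the surviving quadratic form still ranges over all branches of $G$. Pinning down this distinction is the step I would state most carefully; everything else reduces to bookkeeping and the textbook spectral properties of Laplacians.
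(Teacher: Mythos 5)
Your proof is correct. Note first that the paper itself offers no proof of this lemma: it is recorded in the appendix as a collection of well-known facts (the paper leans elsewhere on the Kron-reduction literature, \cite[Lemma II.1]{fd-fb:11d}, for exactly these Laplacian-type properties), so there is no in-paper argument to compare against --- but your route is precisely the standard one that underlies the citation. You correctly unpack the paper's sign convention ($b_{ij}<0$ for inductive branches, hence $B_{ij}=-b_{ij}>0$ off-diagonal and $B_{ii}=\sum_{j\neq i}b_{ij}$, giving zero row sums), identify $-B$ with the weighted Laplacian $L$ of the connected graph $G$, and obtain (i) and (ii) from the quadratic-form identity $x^{\sf T}Lx=\sum_{\{i,j\}\in\mathcal{E}}w_{ij}(x_i-x_j)^2$ together with connectivity. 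Two details you handle well deserve emphasis: (a) your parenthetical reading of (iii) as referring to \emph{proper} principal submatrices is the right one, since $B$ itself is only semidefinite; and (b) in the zero-extension argument you correctly invoke connectivity of the \emph{whole} graph rather than of the induced subgraph on $S$ --- the induced subgraph may be disconnected, yet $y^{\sf T}L_Sy=x^{\sf T}Lx$ still ranges over all branches of $G$, so vanishing of the form forces the padded vector to be globally constant and hence zero. The final inference, that a symmetric positive semidefinite matrix whose quadratic form vanishes only at the origin is positive definite, is sound because for such matrices $y^{\sf T}My=0$ implies $My=\vzeros[]$. Nothing is missing.
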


\medskip

%-------------------------------------------------------
%-------------------------------------------------------

\begin{proposition}\textbf{(Properties of Reduced Quantities)}\label{Prop:AveragingMatrices}
The following statements hold:
\begin{enumerate}
\item[(i)] $-B_{\rm red}$ is an $M$-matrix;
\item[(ii)] $W_1$ and $W_2$ are row-stochastic;
\item[(iii)] $E_L^* > \vzeros[n]$ component-wise. 
%\fbmargin{$E_L^* \succ \boldsymbol{0}_{|\mathcal{L}|}$}
\end{enumerate}
\end{proposition}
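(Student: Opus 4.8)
The plan is to recognize all four reduced quantities as the output of a single Kron reduction of one auxiliary M-matrix, so that the classical fact that Schur complements of M-matrices are again M-matrices does most of the work. Concretely, I would first isolate the ``inverter-grounded'' matrix
$$
\tilde{B} \triangleq \begin{pmatrix} B_{LL} & B_{LI} \\ B_{IL} & B_{II}+K_I\end{pmatrix},
$$
which is precisely the $\mathcal{L}\cup\mathcal{I}$ principal block of the augmented circuit matrix in \eqref{Eq:AugNetwork}, and show that $-\tilde{B}$ is a symmetric M-matrix. Its off-diagonal entries equal $-B_{ij} \le 0$ by Lemma \ref{Lem:Properties}(i) (the diagonal $K_I$ perturbation does not touch off-diagonals), so $-\tilde{B}$ has the $Z$-sign pattern; and writing $-\tilde{B} = -B + \mathrm{blkdiag}(\vzeros[],-K_I)$ exhibits it as the sum of the positive semidefinite matrix $-B$ (kernel $\mathrm{span}(\vones[n+m])$ by Lemma \ref{Lem:Properties}(ii)) and a positive semidefinite diagonal term vanishing exactly on $\{x : x_I = \vzeros[m]\}$. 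These two null spaces intersect trivially because $m \ge 1$, so $-\tilde{B} \succ 0$; a symmetric $Z$-matrix that is positive definite is an M-matrix.

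Statement (i) then follows immediately. The Schur complement of $-\tilde{B}$ with respect to its invertible block $-(B_{II}+K_I)$ is exactly $-B_{\rm red}$ from \eqref{Eq:ReducedNetworkGeneral} (this is the Kron reduction depicted in Figure \ref{Fig:Circuit2}), and the Schur complement of an M-matrix is an M-matrix \cite{AB-RJP:94}. In particular $-B_{\rm red}$ is invertible with $(-B_{\rm red})^{-1} \ge \vzeros[]$ entrywise, and the same reasoning applied to the principal block shows $-(B_{II}+K_I)$ is itself a symmetric M-matrix, so that $(B_{II}+K_I)^{-1} \le \vzeros[]$ entrywise.

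For (ii) I would verify the two defining properties of row-stochasticity separately. The row sums reduce to $B\vones[n+m] = \vzeros[n+m]$ from Lemma \ref{Lem:Properties}(ii): for $W_2$ in \eqref{Eq:A2}, substituting $B_{IL}\vones[n] = -B_{II}\vones[m]$ gives $W_2\vones[n+m] = (B_{II}+K_I)^{-1}(B_{II}+K_I)\vones[m] = \vones[m]$; a parallel manipulation for $W_1$ in \eqref{Eq:A1}, using $K_I\vones[m] = (B_{II}+K_I)\vones[m]+B_{IL}\vones[n]$ and $B_{LI}\vones[m] = -B_{LL}\vones[n]$, collapses $W_1\vones[m]$ to $-B_{\rm red}^{-1}(-B_{\rm red}\vones[n]) = \vones[n]$. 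Nonnegativity is then a sign count built on the signs established in (i): $W_2 = (B_{II}+K_I)^{-1}\begin{pmatrix}-B_{IL} & K_I\end{pmatrix}$ is a product of the entrywise-nonpositive $(B_{II}+K_I)^{-1}$ with an entrywise-nonpositive matrix, hence $W_2 \ge \vzeros[]$; and $W_1 = (-B_{\rm red})^{-1}B_{LI}(B_{II}+K_I)^{-1}K_I$ factors into nonnegative, nonnegative, nonpositive, and nonpositive blocks, which multiply to a nonnegative matrix. Finally, (iii) is immediate from (ii): since $W_1$ is row-stochastic and $E_I^* > \vzeros[m]$, we get $E_L^* = W_1E_I^* \ge (\min_i E_i^*)\,W_1\vones[m] = (\min_i E_i^*)\vones[n] > \vzeros[n]$.

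The main obstacle is the off-diagonal sign preservation in (i): positive definiteness of $-B_{\rm red}$ by itself is routine, since Schur complements preserve definiteness, but showing that the reduction introduces no negative off-diagonal entries --- i.e.\ that the reduced network remains a genuine (negated) susceptance network --- is the crux. The cleanest route is to package both requirements into the single M-matrix Schur-complement closure property, rather than attempting to sign-chase the entries of $(B_{II}+K_I)^{-1}$ and their combination in \eqref{Eq:ReducedNetworkGeneral} directly. Everything in (ii) and (iii) is then bookkeeping that rides on the M-matrix structure from (i) together with the zero-row-sum identity $B\vones[n+m]=\vzeros[n+m]$.
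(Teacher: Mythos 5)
Your proposal is correct and follows essentially the same route as the paper: the paper likewise obtains (i) by recognizing $-B_{\rm red}$ as the Schur complement of your matrix $-\tilde{B} = -(B + \mathrm{blkdiag}(\vzeros[],K_I))$ and invoking closure of symmetric positive definite $M$-matrices under Schur complementation, then verifies the row-sum identities $W_1\vones[m]=\vones[n]$, $W_2\vones[n+m]=\vones[m]$ for (ii), with (iii) following from (ii). The only difference is one of completeness: the paper outsources the positive definiteness of $-\tilde{B}$ and the entrywise nonnegativity of $W_1,W_2$ to the cited lemma \cite[Lemma II.1]{fd-fb:11d}, whereas you spell out both in full (the kernel-intersection argument and the sign-chase via $(B_{II}+K_I)^{-1}\leq\vzeros[]$ and $(-B_{\rm red})^{-1}\geq\vzeros[]$), which is a valid and self-contained filling-in of the same argument.
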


\begin{pfof}{Proposition \ref{Prop:AveragingMatrices}}
\,(i): The first fact follows from the closure of the set of symmetric positive definite $M$-matrices under the Schur complement, as $B_{\rm red}$ is the Schur complement of $B + \mathrm{blkdiag}(\vzeros[],K_I)$ with respect to the $B_{II} + K_I$ block \cite[Lemma II.1]{fd-fb:11d}. 
Similar to \cite[Lemma II.1]{fd-fb:11d}, it can be verified that $W_{1}\vones[m] = \vones[n]$ and $W_{2}\vones[n+m] = \vones[m]$. The proof of (iii) then follows from (ii).
\end{pfof}

%-------------------------------------------------------
%-------------------------------------------------------

\begin{pfof}{Theorem \ref{Thm:ZIP}}
\,With the ZIP load model \eqref{Eq:ZIP}, the reduced power flow equation \eqref{Eq:ZIEquation} is modified to
\begin{equation}\label{Eq:RPFEZIP}
Q_L = {-[E_L](B_{\rm red}+[b_{\rm shunt}])(E_L-E_L^{\rm ZI})}\,.
\end{equation}
Consider now the invertible change of coordinates defined by
\begin{equation}\label{Eq:CoordinateChangeEL}
E_L = [E_L^{\rm ZI}](\vones[n]-Q_{\rm sc}^{-1}Q_L+\epsilon)\,,
\end{equation}
where $\epsilon \in \real^n$ is our new variable. Substituting this into the power flow \eqref{Eq:RPFEZIP}, some basic algebra leads to
\begin{equation}\label{Eq:hexpression}
h(\epsilon,q_L) \triangleq Q_{\rm sc}\epsilon + [q_L-\epsilon]Q_{\rm sc}(q_L-\epsilon) = \vzeros[n]\,,
\end{equation}
where we have abbreviated $q_L = Q_{\rm sc}^{-1}Q_L$. First, note that $h(\vzeros[n],\vzeros[n]) = \vzeros[n]$. Next, an easy calculation gives that $\frac{\partial h}{\partial \epsilon}(\vzeros[n],\vzeros[n]) = Q_{\rm sc}$, which is full rank. It follows from the Implicit Function Theorem that there exist open sets $U_0, V_0 \subset \real^n$ (each containing the origin) and a continuously differentiable function $\map{H}{U_0}{V_0}$ such that $h(H(q_L),q_L) = \vzeros[n]$ for all $q_L \in U_0$. This in turn shows that for $\|q_L\| = \|Q_{\rm sc}^{-1}Q_L\|$ sufficiently small, the expression \eqref{Eq:CoordinateChangeEL} solves the reduced power flow equation \eqref{Eq:RPFEZIP}.
To obtain the quadratic bound on $\|\epsilon\|$, rearrange \eqref{Eq:hexpression} and take the norm of both sides to obtain
\begin{equation}\label{Eq:Qnorm}
\|Q_{\rm sc}\epsilon\| = \|[q_L-\epsilon]Q_{\rm sc}(q_L-\epsilon)\|\,.
\end{equation}%
Since $Q_{\rm sc}$ is invertible, the left-hand side of \eqref{Eq:Qnorm} is lower-bounded by $\alpha \|\epsilon\|$, where $\alpha > 0$. Moreover, a series of elementary inequalities shows that the right-hand side of \eqref{Eq:Qnorm} is upper bounded by $\beta \|q_L - \epsilon\|^2 \leq \beta (\|q_L\| + \|\epsilon\|)^2$, where $\beta \geq \alpha$ depends only on $Q_{\rm sc}$. Combining these bounds, we find that $\epsilon$ satisfies 
\begin{equation}\label{Eq:EpsIneq}
\|\epsilon\| \leq \frac{\beta}{\alpha}(\|q_L\| + \|\epsilon\|)^2\,.
\end{equation}
Since $\|q_L\|$ is sufficiently small, in particular it holds that $4\alpha \|q_L\|/\beta < 1$. In this case, some simple algebra shows that inequality \eqref{Eq:EpsIneq} further implies that
\begin{align*}
\|\epsilon\| &\leq \frac{\alpha}{2\beta} - \|q_L\| - \frac{\alpha}{2\beta}\sqrt{1-4\alpha \|q_L\|/\beta}\\
&\leq \frac{\alpha}{2\beta} - \|q_L\|- \frac{\alpha}{2\beta}\left(1-\frac{\beta}{\alpha}\|q_L\|-8\|q_L\|^2\frac{\beta^2}{\alpha^2}\right)\\
&= 4\frac{\beta}{\alpha}\|q_L\|^2 = \mathcal{O}(\|Q_{\rm sc}^{-1}Q_L\|^2)\,,
\end{align*}
where we have used the fact that for $a \in [0,1]$, $\sqrt{1-a} \geq 1-a/2 - a^2/2$. Exponential stability follows immediately by continuity from the case of ZI loads. 
\end{pfof}

%-------------------------------------------------------
%-------------------------------------------------------

\begin{pfof}{Theorem \ref{Thm:StabDS}} We provide only a sketch of the proof. In steady state, the DS model \eqref{Eq:DynamicShunt} is equivalent to a constant-power model, and hence the approximate steady-state load voltages $E_L^{\rm DS}$ are given as in Theorem \ref{Thm:ZIP} with ZI components $b_{\rm shunt} = \vzeros[n]$ and $I_{\rm shunt} = \vzeros[n]$, yielding the stated equilibrium.
Stability can no longer be shown by applying Theorem \ref{Thm:Stab}, since Theorem \ref{Thm:Stab} was proven in the absence of load dynamics. A tedious but straightforward calculation shows that applying the proof methodologies of Theorems \ref{Thm:Stab} and \ref{Thm:ZIP} to the extended dynamics \eqref{Eq:VoltDroopQuadraticClosed},\eqref{Eq:DynamicShunt} yields local exponential stability.
% the same result: to test for stability, we need to study only the Jacobian of the reduced power flow equation \eqref{Eq:ReducedFixedPoint}. A calculation analogous to that in Theorem \ref{Thm:ZIP} yields the stated stability condition \eqref{Eq:SuffEL2}.
\end{pfof}

%-------------------------------------------------------
%-------------------------------------------------------

\begin{lemma}\textbf{(Electrical Distances)}\label{Lem:Effective} Let $X^{\rm eff}_{kj}$ be the differential effective reactance between loads $k,j \in \mathcal{L}$, and for $\{i,k\} \in \mathcal{E}$ let $X_{ik} = 1/B_{ik}$ be the direct reactance, with $X_{ik} = +\infty$ if $\{i,k\} \notin \mathcal{E}$. Then
$$
-(B_{IL}B_{LL}^{-1})_{ij} = \sum_{k \in \mathcal{L}}\nolimits X^{\rm eff}_{kj}/X_{ik}\,.
$$
\end{lemma}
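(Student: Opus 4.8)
The plan is to reduce the identity to the single computation $X^{\rm eff}_{kj} = -(B_{LL}^{-1})_{kj}$, after which the claim follows by a direct matrix manipulation. First I would record the elementary observation that, by definition, $X_{ik} = 1/B_{ik}$ for $\{i,k\} \in \mathcal{E}$ while $B_{ik} = 0$ precisely when $\{i,k\} \notin \mathcal{E}$, so that the convention $X_{ik} = +\infty$ gives $1/X_{ik} = B_{ik} = (B_{IL})_{ik}$ uniformly over all $k \in \mathcal{L}$ (for $i \in \mathcal{I}$). Hence the right-hand side of the asserted identity is $\sum_{k \in \mathcal{L}} X^{\rm eff}_{kj}(B_{IL})_{ik}$, and once $X^{\rm eff}_{kj} = -(B_{LL}^{-1})_{kj}$ is in hand this equals $-\sum_{k}(B_{IL})_{ik}(B_{LL}^{-1})_{kj} = -(B_{IL}B_{LL}^{-1})_{ij}$, which is exactly the claim.

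To establish $X^{\rm eff}_{kj} = -(B_{LL}^{-1})_{kj}$, I would translate Definition \ref{Def:DiffReactance} into the linearized current-voltage relation of the load subnetwork with the inverters held at fixed voltages. Writing the load-bus current injections as $I_L = B_{LL}E_L + B_{LI}E_I^*$ in the associated fictitious network (the decoupling approximation lets us treat $B$ as a real conductance-type matrix relating current and voltage magnitudes), the open-circuit load voltages $E_L^{\rm oc}$ are obtained by setting $I_L = \vzeros[n]$, giving $E_L^{\rm oc} = -B_{LL}^{-1}B_{LI}E_I^*$; note that $B_{LL}$ is invertible since all principal submatrices of $B$ are negative definite by Lemma \ref{Lem:Properties}(iii). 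Solving for $E_L$ then yields $E_L = E_L^{\rm oc} + B_{LL}^{-1}I_L$. Extracting a unit current at load $j$ with all other load injections zero corresponds to $I_L = -e_j$, where $e_j$ is the $j$-th standard basis vector, so that $E_k - E_k^{\rm oc} = -(B_{LL}^{-1})_{kj}$, which is precisely $X^{\rm eff}_{kj}$ as in Definition \ref{Def:DiffReactance}.

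The final step is then purely algebraic, combining the two expressions above. As a consistency check I would also note that, since $-B_{LL}$ is a symmetric positive-definite $M$-matrix (positive definite by Lemma \ref{Lem:Properties}(iii), with nonpositive off-diagonal entries by Lemma \ref{Lem:Properties}(i)), we have $-(B_{LL}^{-1})_{kj} = ((-B_{LL})^{-1})_{kj} \geq 0$, confirming that $X^{\rm eff}_{kj}$ is a nonnegative reactance as its name suggests. The main obstacle here is conceptual rather than computational: one must set up the effective-reactance calculation of Definition \ref{Def:DiffReactance} with the correct reference state (the open-circuit condition at fixed inverter voltages) and the correct sign convention for current \emph{extraction}, so that the abstract network quantity $X^{\rm eff}_{kj}$ is identified unambiguously with the matrix entry $-(B_{LL}^{-1})_{kj}$. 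Once this identification is secured, no further estimates are needed and the lemma follows immediately.
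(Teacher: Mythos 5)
Your proposal is correct and takes essentially the same route as the paper's own proof: both set $I_L=\vzeros[n]$ in the load-bus current balance to get $E_L^{\rm oc}=-B_{LL}^{-1}B_{LI}E_I$, identify $X^{\rm eff}=-B_{LL}^{-1}$, and then read the claimed identity off entrywise using $1/X_{ik}=(B_{IL})_{ik}$ (which vanishes for non-edges). Your added touches --- justifying invertibility of $B_{LL}$ via Lemma \ref{Lem:Properties}(iii), handling the sign convention for current extraction explicitly, and the nonnegativity check on $X^{\rm eff}_{kj}$ from the $M$-matrix property of $-B_{LL}$ --- are correct refinements of the same argument, not a different approach.
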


\begin{pfof}{Lemma \ref{Lem:Effective}}
From the current balance equations $-\boldsymbol{\mathrm{j}}I_L = B_{LL}E_L + B_{LI}E_I$, observe by setting $I_L = \vzeros[n]$ that the open-circuit load voltages are given by $E_L^{\rm oc} = -B_{LL}^{-1}B_{LI}E_I$. Then the current balance can be rewritten as $E_L - E_L^{\rm oc} = -\boldsymbol{\mathrm{j}}B_{LL}^{-1} I_L$, and it follows that $X^{\rm eff} = -B_{LL}^{-1}$. The desired formula is then immediate by noting that for $i \in \mathcal{L}$, $j \in \mathcal{I}$, and $\{i,j\} \in \mathcal{E}$, $1/(B_{LI})_{ij} = X_{ij}$ by definition.
\end{pfof}

% biography section
% 
% If you have an EPS/PDF photo (graphicx package needed) extra braces are
% needed around the contents of the optional argument to biography to prevent
% the LaTeX parser from getting confused when it sees the complicated
% \includegraphics command within an optional argument. (You could create
% your own custom macro containing the \includegraphics command to make things
% simpler here.)
%\begin{IEEEbiography}[{\includegraphics[width=1in,height=1.25in,clip,keepaspectratio]{mshell}}]{Michael Shell}
% or if you just want to reserve a space for a photo:

\vspace{-2em}

\begin{IEEEbiography}[{\includegraphics[width=1in,height=1.25in,clip,keepaspectratio]{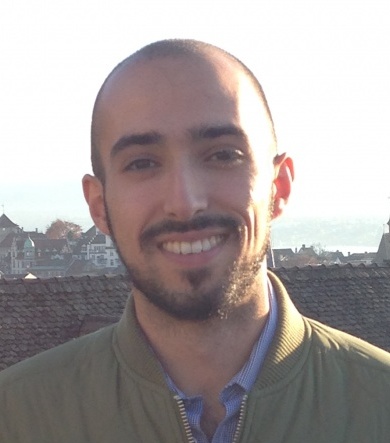}}]{John W. Simpson-Porco} (S'11--M'16) received the B.Sc. degree in engineering physics from QueenÕs University, Kingston, ON, Canada in 2010, and the Ph.D. degree in mechanical engineering from the University of California at Santa Barbara, Santa Barbara, CA, USA in 2015.

He is currently an Assistant Professor of Electrical and Computer Engineering at the University of Waterloo, Waterloo, ON, Canada. He was previously a visiting scientist with the Automatic Control Laboratory at ETH Z\"{u}rich, Z\"{u}rich, Switzerland. His research focuses on the control and optimization of multi-agent systems and networks, with applications in modernized power grids.

Prof. Simpson-Porco is a recipient of the 2012-2014 Automatica Prize, the 2014 Peter J. Frenkel Foundation Fellowship, and the Center for Control, Dynamical Systems and Computation Outstanding Scholar Fellowship.
\end{IEEEbiography}

\vspace{-2em}

% if you will not have a photo at all:
\begin{IEEEbiography}[{\includegraphics[width=1in,height=1.25in,clip,keepaspectratio]{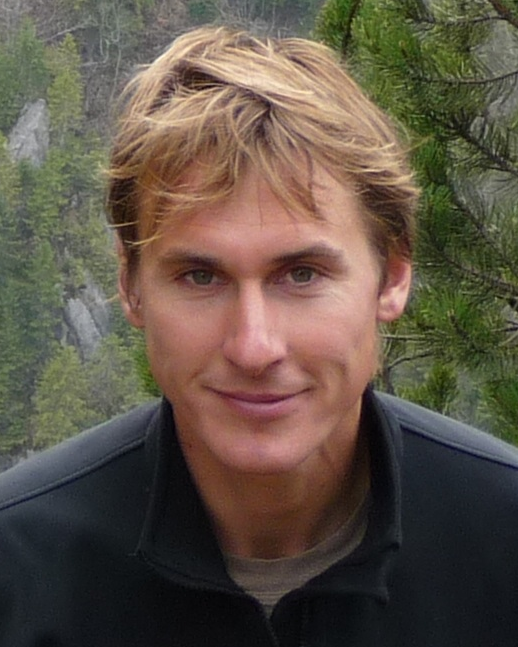}}]
{Florian D\"{o}rfler} (S'09--M'13) is an Assistant Professor at the Automatic Control Laboratory at ETH Z\"{u}rich. He received his Ph.D. degree in Mechanical Engineering from the University of California at Santa Barbara in 2013, and a Diplom degree in Engineering Cybernetics from the University of Stuttgart in 2008. From 2013 to 2014 he was an Assistant Professor at the University of California Los Angeles. His primary research interests are centered around distributed control, complex networks, and cyberÐphysical systems currently with applications in energy systems and smart grids. He is a recipient of the 2009 Regents Special International Fellowship, the 2011 Peter J. Frenkel Foundation Fellowship, the 2010 ACC Student Best Paper Award, the 2011 O. Hugo Schuck Best Paper Award, and the 2012-2014 Automatica Best Paper Award. As a co-advisor and a co-author, he has been a finalist for the ECC 2013 Best Student Paper Award.
\end{IEEEbiography}

\vspace{-3em}

% insert where needed to balance the two columns on the last page with
% biographies
%\newpage

\begin{IEEEbiography}[{\includegraphics[width=1in,height=1.25in,clip,keepaspectratio]{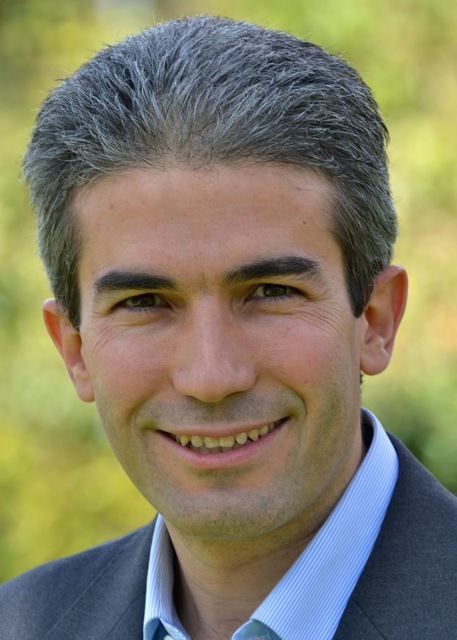}}]{Francesco Bullo} (S'95--M'99--SM'03--F'10) received the Laurea degree "summa cum laude" in Electrical Engineering from the University of Padova, Italy, in 1994, and the Ph.D. degree in Control and Dynamical Systems from the California Institute of Technology in 1999. He is a Professor with the Mechanical Engineering Department and the Center for Control, Dynamical Systems and Computation at the University of California, Santa Barbara. He
was previously associated with the University of Illinois at Urbana-Champaign.
His main research interests are network systems and distributed control
with application to robotic coordination, power grids and social networks.
He is the coauthor of "Geometric Control of Mechanical Systems" (Springer,
2004, 0-387-22195-6) and "Distributed Control of Robotic Networks"
(Princeton, 2009, 978-0-691-14195-4).  He received the 2008 IEEE CSM
Outstanding Paper Award, the 2010 Hugo Schuck Best Paper Award, the 2013
SIAG/CST Best Paper Prize, and the 2014 IFAC Automatica Best Paper
Award. He has served on the editorial boards of IEEE Transactions on
Automatic Control, ESAIM: Control, Optimization, and the Calculus of
Variations, SIAM Journal of Control and Optimization, and Mathematics of
Control, Signals, and Systems.
\end{IEEEbiography}

\end{document}